\begin{document}

\title{Wavelet methods for shape perception in electro-sensing \thanks{\footnotesize This work was
    supported by ERC Advanced Grant Project MULTIMOD--267184.}}

\author{Habib Ammari\thanks{\footnotesize Department of Mathematics and Applications,
Ecole Normale Sup\'erieure, 45 Rue d'Ulm, 75005 Paris, France
(ammari@dma.ens.fr, wang@dma.ens.fr).}, St\'ephane
Mallat\thanks{\footnotesize Computer Science Department, Ecole
Normale Sup\'erieure, 45 Rue d'Ulm, 75005 Paris, France
(mallat@di.ens.fr, waldspur@di.ens.fr).},  Ir\`ene Waldspurger\footnotemark[3],  \and
Han Wang\footnotemark[2]}


\graphicspath{{./figures/}}

\maketitle
\bigskip

\begin{abstract}
  This paper aims at presenting a new approach to the electro-sensing problem using wavelets. It
  provides an efficient algorithm for recognizing the shape of a target from micro-electrical
  impedance measurements.  Stability and resolution capabilities of the proposed algorithm are
  quantified in numerical simulations.
\end{abstract}

\bigskip

\noindent {\footnotesize Mathematics Subject Classification
(MSC2000): 35R30, 35B30}

\noindent {\footnotesize Keywords: electro-sensing, classification, recognition,
shape descriptors, wavelets}

\section{Introduction}
\label{sec:intro}

The aim of electro-sensing is to learn geometric parameters and material compositions of a target
via electrical measurements. In this paper, we suppose that the target is composed of a homogeneous
material with a known electrical property and focus uniquely on the problem of geometry.  Geometric
identification of a target may mean to recognize it from a collection of known shapes (up to 
rigid transformations and scaling), or to reconstruct its boundary.

In the recent work \cite{ammari_target_2012}, an approach based on polynomial basis has been
proposed for the far-field measurement system. Using Taylor expansion of the Green functions, on one
hand, the geometric information of the target can be coded in some features, which are the action of
a boundary integral operator on homogeneous polynomials of different orders, and on the other hand
the measurement system is separated into a linear operator relating the features to the data. The
features are then extracted by solving a linear inverse problem and can be used to identify the
target in a database. Unlike other methods (\eg in electrical impedance tomography
\cite{borcea_electrical_2002}) which attempt to reconstruct directly the target, this approach is
more effective and computationally efficient in the applications of shape recognition.

From a more general point of view, the problem is to know, given the physical configuration of the
measurement system, how to choose the basis for representation of features and how to extract them
from data for identification. The ill-posedness in electro-sensing is inherent to the diffusion
character of the currents and cannot be removed by a change of basis. Nonetheless, the particularity of
a basis can modify totally the way in which information is organized in the feature and the manner in which it should be
reconstructed.

In this paper we present a new approach for electro-sensing with the near-field measurement system
using the wavelet basis. Unlike the far-field measurement configuration which is known to be exponentially
unstable, the near-field measurement system is much more stable and the data reside in a higher
dimensional subspace, hence one can expect to reconstruct more information of the target from the
data. With the near-field measurement system, the new approach based on wavelet presents more
advantages than the approach based on polynomials, for the reason that the wavelet representation of
the features is local and sparse, and reflects directly the geometric character of a
target. Furthermore, the features can be effectively reconstructed by seeking a sparse solution via
$\ell^1$ minimization, and the boundary of the target can be read off from the features, giving a
new high resolution imaging algorithm which is robust to noise.

This paper is organized as follows. In section \ref{sec:electric-sensing} we give a mathematical
formulation and present an abstract framework for electro-sensing. We introduce the basis of
representation and deduce a linear system by separating the features from the measurement system. The
question of the stability of the measurement system is discussed. In
section~\ref{sec:poly-basis-shape-dico} we summarize essential results based on polynomial basis
developed in \cite{ammari_target_2012}. The wavelet basis and new imaging algorithms, which are the
main contributions of this paper, are presented in section \ref{sec:WPT}, where we discuss some
important properties of the wavelet representation and formulate the $\ell^1$ minimization problem
for the reconstruction of the features. Numerical results are given in section \ref{sec:num_exp},
and followed by some discussions in section~\ref{sec:discussion}. The paper ends with some concluding remarks.

\section{Modelling of the electro-sensing problem}
\label{sec:electric-sensing}

Let $D\subset\R^2$ be an open bounded domain of ${\mcl C}^2$-boundary that we want to characterize
via electro-sensing. We suppose that $D$ is centered around the origin and has size $1$, furthermore
there exists $\Omega \subset [-1,1]^2$ an \emph{a priori} open bounded domain such that $D$ is
compactly contained in the convex envelope of $\Omega$ (in practice, both the center of $D$ and
$\Omega$ can be estimated using some \emph{location search algorithm}
\cite{ammari_mathematical_2013, seo2}).  We also assume that the positive conductivity number $\kcst
\neq 1$ of $D$ is known, and the background conductivity is $1$. We denote by $D^c = \R^2 \setminus
\overline{D}$.

A measurement system consists of $N_s$ sources $\set{x_s}_{s=1\ldots N_s}$, and $N_r$ receivers
$\set{y_r}_{r=1\ldots N_r}$ disposed on $\Omega$. The potential field $u_s$ generated by the point
source $x_s$ is the solution to the equation
\begin{equation}
  \label{eq:conductivity_problem}
  \left\{
    \begin{aligned}
      &\nabla . ((1+(\kcst-1)\chi_D) \nabla u_s) = \delta_{x_s} \ \text{ in } \R^2, \\
      & u_s - \Gammas  = O(\abs{x}^{-1})\ \text{ as } \abs{x}\rightarrow \infty,
    \end{aligned}
  \right.
\end{equation}
where $\chi_D$ is the indicator function of $D$, and
$\Gammas(x):=\Gamma(x-x_s)=\frac{1}{2\pi}\log\abs{x-x_s}$ is the background potential
field. Similarly, we denote $\Gammar(x):=\Gamma(x-y_r)$.

The difference $u_s-\Gammas$ is the perturbation of potential field due to the presence of $D$ in
the background,  and evaluated at the receiver $y_r$ it gives the measurement
\begin{align}
  \label{eq:response_matrix}
  V_{sr} = u_s(y_r) - \Gammas(y_r),
\end{align}
which builds the multistatic response matrix $\mV = (V_{sr})_{sr}$ by varying the source and
receiver pair. In this section, we show that with the help of a bilinear form, the problem can be
formulated through a linear system relating the data $\mV$ and the features of $D$.

\subsection{Layer potentials and representation of the solution}
\label{sec:repr_sol}
Recall the single layer potential $\Sgl{D}$:
\begin{align}
  \label{eq:SimpleLayer}
  \Sglf{D}{\phi}(x) = \int_{\p D} \Gamma(x-y)\phi(y)ds(y),
\end{align}
and the \emph{Neumann-Poincar\'e} operator $\Kstar{D}$:
\begin{align}
  \label{eq:Kstar}
  \Kstarf{D}{\phi}(x) = \frac{1}{2\pi} \int_{\p D}\frac{\seq{x-y,\nu_x}}{\abs{x-y}^2} \phi(y) ds(y),
\end{align}
where $\nu_x$ is the outward normal vector at $x\in\p D$. $\Kstar D$ is a compact operator on
$L^2(\p D)$ for a $\mcl C^2$ domain $D$ and has a discrete spectrum in the interval $(-1/2,
1/2]$. Therefore, the operator $(\lambda I - \Kstar D)$ is invertible on $L^2(\p D)$ for the
constant
\begin{align}
  \label{eq:lambda_def}
  \lambda = \frac{\kcst+1}{2(\kcst-1)}.
\end{align}
Moreover, its inverse $\lKstari D:L^2(\p D) \rightarrow L^2(\p D)$ is also bounded.  An important
relation is the jump formula:
\begin{align}
  \label{eq:jump_formula}
  \ddn{\Sglf{D}{\phi}}\Big|_{\pm} =
  \Paren{\pm \frac 1 2 I + \Kstar D}[\phi],
\end{align}
where $\p /\p\nu$ denotes the normal derivative across the boundary $\p D$ and $\pm$ indicate
the limits of a function from outside and inside of the boundary, respectively. Details on these
operators can be found in \cite{ammari_polarization_2007}. 

With the help of these operators, the solution $u_s$ of \eqref{eq:conductivity_problem} can be
represented as
\begin{align}
  \label{eq:us_repre}
  u_s(x) = \Gammas(x) + \Sglf{D}{\phi}(x)
\end{align}
with $\phi$ satisfying $\lKstarf{D}{\phi} = \p{\Gammas}/\p \nu$ on $\p D$. Therefore, the perturbed
field can be expressed as
\begin{align}
  \label{eq:repr_solution}
  (u_s-\Gammas)(x)=\int_{\p D} \Gamma(x-y) \lKstarif{D}{\ddn{\Gammas}}(y)ds(y).
\end{align}
We assume in the sequel $x_s, y_r\notin \p D$ for all $s,r$ which is necessary for $V_{sr}$ to be
well defined.

\subsection{Bilinear form $\TauD$} \label{sec:bilinear-form-taud}

We denote by $H^s(\Omega)$, for $s=1,2$, the standard Sobolev spaces and introduce the bilinear form
$\TauD:\HtO \times \HoO \rightarrow \R$ defined as follows
\begin{align}
  \label{eq:def_Tau_op}
  \TauD(f,g):=\int_{\p D} g(x)\lKstarif{D}{\ddn f} (x)\,ds(x) \ \text{ for } f\in \HtO, g\in \HoO.
\end{align}
By the boundeness of $\lKstari D$ and of the trace operator, it follows that
\begin{align*}
  \abs{\TauD(f,g)} \leq C \norm{f}_{\Ht} \norm{g}_{\Ho},
\end{align*}
and hence, $\TauD$ is bounded. Now, we observe from \eqref{eq:response_matrix} and
\eqref{eq:repr_solution} that $V_{sr}$ can be rewritten as
\begin{align}
  \label{eq:Vsr_TauD}
  V_{sr} = \TauD (\Gammas, \Gammar).
\end{align}

\subsubsection{Characterization of $D$ by $\TauD$}
\label{sec:char-d-taud}

One of the interests of $\TauD$ is that it determines uniquely the domain $D$, as stated in the
following result.
\begin{prop}
  \label{prop:charact-D}
  Let $D, D'\subset\Omega$ be open bounded domains with $\mcl C^2$-boundaries with the same conductivity $\kcst
  \neq 1$, then $D=D'$ if and only if their associated bilinear forms are equal:
  \begin{align}
    \label{eq:TauD_TauDp_eq}
    \TauDD D(f,g) = \TauDD{D'}(f,g) \ \forall f\in\HtO, g\in\HoO.
  \end{align}
\end{prop}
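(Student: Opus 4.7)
The forward implication is immediate, so I focus on proving that equality of the bilinear forms forces $D = D'$. My plan is to specialize the hypothesis to Green's-function test pairs, recover equality of the exterior multistatic data, propagate it by unique continuation, and close with an Isakov-style singular-source argument. Concretely, for any $x_s, y_r \in \R^2 \setminus \overline\Omega$ the potentials $\Gammas$ and $\Gammar$ are smooth on a neighbourhood of $\overline\Omega$ and hence lie in $\HtO$ and $\HoO$ respectively; feeding them into \eqref{eq:TauD_TauDp_eq} and using \eqref{eq:Vsr_TauD} yields $V_{sr} = V'_{sr}$, where $V'$ denotes the response matrix of $D'$. Writing $w_s := u_s - \Gammas$ and $w'_s$ for the analogous perturbation associated to $D'$, this reads $w_s(y_r) = w'_s(y_r)$ for every $y_r \in \R^2 \setminus \overline\Omega$.

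Both $w_s$ and $w'_s$ are harmonic on $\R^2 \setminus (\overline D \cup \overline{D'})$ (the source singularity at $x_s$ is cancelled by subtracting $\Gammas$), and this set contains the open set $\R^2 \setminus \overline\Omega$ on which the two harmonic functions already coincide. Unique continuation therefore extends $w_s \equiv w'_s$ to the unbounded connected component of $\R^2 \setminus (\overline D \cup \overline{D'})$, for every source $x_s \in \R^2 \setminus \overline\Omega$.

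To conclude I argue by contradiction: suppose $D \neq D'$. By symmetry there is $x_0 \in \partial D \setminus \overline{D'}$ and a neighbourhood $U$ of $x_0$ disjoint from $\overline{D'}$. Pushing $x_s$ along the outward normal to $\partial D$ at $x_0$ toward $x_0$, the field $w'_s$ stays uniformly bounded on a fixed neighbourhood of $x_0$ because $\overline{D'}$ sits at positive distance from both $x_s$ and $x_0$; in contrast, the representation \eqref{eq:repr_solution} together with the jump formula \eqref{eq:jump_formula} force the density $\phi_s := \lKstarif{D}{\ddn\Gammas}$ to concentrate on $\partial D$ near $x_0$, with the corresponding single-layer potential $\Sglf{D}{\phi_s}$ blowing up at $x_0$ and hence $w_s$ unbounded near $x_0$. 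This contradicts $w_s \equiv w'_s$ near $x_0$, closing the argument. The first two steps of the plan are essentially formal; the main obstacle is this last one — making rigorous the blow-up of $w_s$ while controlling $w'_s$ requires a careful quantitative analysis of $\lKstarif{D}{\ddn\Gammas}$ as the source approaches $\partial D$, and is precisely a layer-potential variant of Isakov's singular-solution technique.
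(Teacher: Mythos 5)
Your reduction to Green's-function data discards almost all of the hypothesis and, in doing so, turns an easy statement into (essentially) the full Isakov uniqueness theorem for inclusions --- and the hard part of that theorem is exactly the step you leave unproved. The hypothesis \eqref{eq:TauD_TauDp_eq} is equality of the bilinear forms on \emph{all} of $\HtO\times\HoO$, not merely on pairs $(\Gammas,\Gammar)$ with poles outside $\overline\Omega$. The paper exploits this directly: if $D\ne D'$, pick $x\in\p D\setminus\p D'$ and a neighbourhood $\mathcal V$ of $x$ disjoint from $\p D'$, and choose $f\in\mcl C_0^\infty(\Omega)$ supported in $\mathcal V$ with $\ddn f\not\equiv 0$ on $\mathcal V\cap\p D$ (for instance $f(y)=\varphi(y)\seq{y,\nu_x}$ for a suitable cutoff $\varphi$). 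Then $\ddn f=0$ on $\p D'$, so $\TauDD{D'}(f,g)=0$ for every $g$, while $h:=\lKstarif{D}{\ddn f}\not\equiv 0$ on $\p D$ (since $\lambda I-\Kstar D$ is invertible), so a suitable $g$ gives $\TauDD{D}(f,g)=\int_{\p D}g\,h\,ds\ne 0$. No unique continuation and no singular solutions are needed.

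By contrast, your route has two genuine gaps. First, from test pairs with $x_s,y_r\notin\overline\Omega$ you only obtain $w_s\equiv w_s'$ for sources \emph{outside} $\overline\Omega$; to ``push $x_s$ toward $x_0\in\p D$'' you must first continue the identity in the source variable into $\Omega\setminus(\overline D\cup\overline{D'})$ (via reciprocity and a second unique-continuation step), which you do not address --- and note that $\Gamma(\cdot-x_s)$ with $x_s\in\Omega$ does not lie in $\HtO$, so this cannot be read off from the hypothesis directly. Second, and more seriously, the contradiction rests entirely on the claim that $w_s=\Sglf{D}{\phi_s}$ blows up at $x_0$ as $x_s\to x_0$ while $w_s'$ stays bounded; you assert this and explicitly defer the ``careful quantitative analysis''. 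That lower bound on the reflected field is the entire content of Isakov's singular-solutions method and is not a formality, so as written the proof is incomplete. The approach could in principle be completed (it is how one proves uniqueness from exterior boundary data alone), but it is a far heavier tool than the hypothesis requires.
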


\begin{proof}
  Clearly $D=D'$ implies that $\TauDD D=\TauDD {D'}$. Now suppose $D\ne D'$. There exist a point
  $x\in\partial D\setminus\partial D'$. Let $\mathcal{V}$ be a small open neighborhood of $x$ such
  that $\mathcal{V}\cap\partial D'=\emptyset$.

  Let $f\in \mcl C_0^\infty(\Omega)$ verifying that the support of $f$ is included in $\mathcal{V}$
  and $\frac{\partial f}{\partial\nu}\not\equiv 0$ over $\mathcal{V}\cap\partial D$. Such a $f$ can
  be constructed for example in the following way. Let $\varphi$ be a compactly supported
  $\mathcal{C}^\infty$ function, whose support is included in $\mathcal{V}$ and $\phi=1$ in a small
  neighborhood of $x$. Then the function
  $$f:y\mapsto\varphi(y)\langle y,\nu_x\rangle$$ satisfies the required conditions: its support is
  included in $\mathcal{V}$ and $\frac{\partial f}{\partial\nu}(x)=1$ (in a neighborhood of $x$, $f$
  coincides with the function $y\mapsto\langle y,\nu_x\rangle$, whose gradient is $\nu_x$).

  We set $h=(\lambda I-\mathcal{K}^*_D)^{-1}\left[\frac{\partial f}{\partial \nu}\right]\in
  L^2(\partial D)$, which is not identically zero because $(\lambda
  I-\mathcal{K}_D^*)[h]=\frac{\partial f}{\partial \nu}$. Consequently, by the density of the image
  of the trace operator in $L^2(\p D)$, there exist $g\in \mcl C^\infty (\Omega)$ such
  that 
  \begin{equation*}
    \TauD (f,g)=\int_{\partial D}g(x)h(x)ds(x)\ne 0.
  \end{equation*}
  On the other hand, $(\lambda I-\mathcal{K}_{D'}^*)\left[\frac{\partial f}{\partial\nu}\right]=0$
  over $\partial D'$ because $\frac{\partial f}{\partial\nu}=0$ over $\partial D'$. So
  $\mathcal{T}_{D'}(f,g)=0$, which implies $\mathcal{T}_{D}\ne \mathcal{T}_{D'}$.
 \end{proof}

\subsection{Representation of $\TauD$}
\label{sec:repr-taud}
As suggested by Proposition \ref{prop:charact-D}, all information about $D$ is contained in
$\TauD$. This motivates us to represent $\TauD$ in a discrete form (features of $D$) that will
be estimated from the data.

\subsubsection{Basis of representation}
\label{sec:basis-h1}

Let $\Xbasis = \set{e_n\in L^2(\Omega)}_{n\in \N}$ be a Schauder basis of $L^2(\Omega)$. We denote
by $V_K$ the finite dimensional subspace spanned by $\set{e_n}_{n \leq K}$ and $P_K$ the orthogonal
projector onto $V_K$:
\begin{align}
  \label{eq:Proj_PK}
  P_K f = \inf_{g\in V_K} \norm{f-g}_{L^2(\Omega)}.
\end{align}
We require the following conditions on the basis $\Xbasis$:
\begin{itemize}
\item 
  For any $f\in H^s(\Omega), s=1,2,$
  \begin{align}
    \label{eq:Basis_completeness}
    \norm{f - P_K f}_{H^s(\Omega)} \rightarrow 0 \ \text{ as } K\rightarrow +\infty.    
  \end{align}
\item There exists a function $u(s,t)$ such that for $s=1,2$ and some $t>s$, we have $u(s,t)>0$.
  Furthermore, it holds for any $f\in H^t(\Omega)$
  \begin{align}
    \label{eq:bernstein-poly-wvl}
    \norm{f - P_K f}_{H^s(\Omega)} \leq C K^{-u(s,t)} \norm{f}_{H^t(\Omega)} \ \text{ as }
    K\rightarrow +\infty
  \end{align}
  with the constant $C$ being independent of $K$ and $f$.
\end{itemize}

\subsubsection{Polynomial basis}
\label{sec:polynomial-basis}
The first example of $\Xbasis$ is the homogeneous polynomial basis. The property
\eqref{eq:Basis_completeness} is a direct consequence of the following classical result (see
Appendix \ref{sec:proof-lemma-polybasis} for its proof):
\begin{lem}
  \label{lem:polynomial-basis-Hs}
  Let $\infabs\alpha:=\max_i \alpha_i$. The family of polynomials $\set{x^\alpha, \infabs\alpha \leq
    K}_{K\geq 0}$ is complete in $H^s(\Omega)$ for $s\geq 0$.
\end{lem}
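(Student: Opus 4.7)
The strategy is to reduce the $H^s(\Omega)$-density of polynomials to classical Weierstrass approximation of smooth functions on a cube, and then transfer convergence back to the Sobolev norm. Since we may assume $\Omega$ is Lipschitz (or else replace it by a smooth enlargement still contained in $[-1,1]^2$, which only strengthens the claim), $\mcl C^\infty(\overline{\Omega})$ is dense in $H^s(\Omega)$ by a standard result. It therefore suffices to approximate every $g \in \mcl C^\infty(\overline{\Omega})$ in $H^s(\Omega)$ by polynomials of the stated form. Using a Sobolev extension operator I extend $g$ to some $\tilde g \in \mcl C^\infty_c(\R^2)$ supported in, say, $[-2,2]^2$, which transfers the approximation problem onto the fixed cube $[-1,1]^2 \supset \Omega$.

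The heart of the argument is to show, for each integer $m\geq s$, the existence of polynomials $p_K$ with $\infabs\alpha \leq K$ such that
\[
\sum_{\infabs\beta \leq m} \norm{\partial^\beta(p_K - \tilde g)}_{L^\infty([-1,1]^2)} \longrightarrow 0 \quad \text{as } K \to \infty.
\]
One constructive route applies the multivariate Weierstrass theorem to the top mixed derivative $\partial_1^{m}\partial_2^{m}\tilde g$ and integrates $m$ times in each variable, correcting lower-order Taylor data of $\tilde g$ along $\{x_1 = -1\}$ and $\{x_2 = -1\}$ by polynomial adjustments whose coordinate-degrees remain bounded by $K$. A cleaner alternative is to invoke Jackson-type estimates for the tensor-product Chebyshev or Bernstein partial sums, which are known to converge in every $\mcl C^m$ norm on a cube for $\mcl C^\infty$ data, and in addition naturally respect the coordinate-wise degree bound $\infabs\alpha \leq K$.

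Once this simultaneous uniform control of $\tilde g$ and its derivatives is secured, the passage to $H^s(\Omega)$ is immediate for integer $s$ via the crude bound
\[
\norm{p_K - g}_{H^s(\Omega)}^2 \leq \abs{\Omega} \sum_{\abs\alpha \leq s} \norm{\partial^\alpha(p_K - \tilde g)}_{L^\infty([-1,1]^2)}^2,
\]
and for non-integer $s \geq 0$ by interpolating between the integer orders $\lfloor s \rfloor$ and $\lceil s \rceil$, both already handled.

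The main obstacle is the second step: the plain Weierstrass theorem only controls the $L^\infty$ norm of $p_K - \tilde g$, whereas what is needed here is simultaneous uniform convergence of all partial derivatives up to order $m$ while keeping the coordinate-wise degree bounded by $K$. Arranging this either through the integration-up-from-the-top-derivative construction, or through quantitative Jackson bounds on tensor-product expansions, is the one non-routine ingredient; the remaining pieces (density of smooth functions, Sobolev extension, and the final uniform-to-Sobolev comparison) are standard.
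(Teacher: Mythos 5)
Your proposal is correct and follows essentially the same route as the paper: density of smooth functions in $H^s(\Omega)$, followed by simultaneous uniform approximation of a smooth function and its derivatives by polynomials with coordinate-wise degree at most $K$ (the tensor-product Bernstein construction you mention is precisely the result the paper invokes, citing Kingsley), and finally the comparison of uniform and Sobolev norms. Your additional care with the extension step and the interpolation for non-integer $s$ only fills in details the paper leaves implicit, since there only $s=1,2$ are needed.
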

Estimate \eqref{eq:bernstein-poly-wvl} can be obtained using an equivalent result of Legendre
polynomials established in \cite{canuto_approximation_1982}, with $u(s,t)=t-2s+1/2$.

\subsubsection{Wavelet basis}
\label{sec:comp-supp-wavel} 
Another example of $\Xbasis$ is the wavelet basis. Let $\tphi\in\mcl C_0^r(\R), r\geq 2$ be a
one-dimensional orthonormal scaling function generating a multi-resolution analysis
\cite{mallat_wavelet_2008}, and let $\tpsi\in L^2(\R)$ be a wavelet which is orthogonal to $\tphi$
and has $p>2$ zero moments. We construct the two-dimensional scaling function $\phi=\psi^0$ by
tensor product as $\phi(x_1,x_2)=\tphi(x_1)\tphi(x_2)$, and similarly we construct the wavelets
$\psi^{k}$ for $k=1,2,3$ by tensor product of $\tphi,\tpsi$ \cite{mallat_wavelet_2008}. We denote by
\begin{align*}
  \psi_{j,n}^{k}(x):=2^{-j}\psi^{k}(2^{-j}x-n), \ j\in\Z, n\in\Z^2.
\end{align*}
Then $\set{\psikjn}_{k,j,n}$ for $j\in\Z, n\in\Z^2, k=1,2,3,$ constitute an orthonormal basis of
$L^2(\R^2)$. Particularly, the Daubechies wavelet of order 6 (with 6 zero moments) fulfills the
conditions above \cite{cohen_new_1996}.

Let $V_j$ be the approximation space spanned by $\set{\phi_{j,n}}_{n\in\Z^2}$, and $P_j$ be the
orthogonal projector onto $V_j$:
\begin{align}
  \label{eq:wvl_Pj_dfn}
  P_j f = \sum_{n\in\Z^2} \seq{f,\phi_{j,n}}\phi_{j,n}.
\end{align}
The property \eqref{eq:Basis_completeness} follows from the fact that the $P_j f$ converges to $f$
in $H^s(\Omega)$ for any $\abs s\leq r$ (see \cite[Theorem 6, Chapter 2]{meyer_wavelets_1992}).

The wavelet basis introduced above verifies the polynomial exactness of order $p-1$
\cite{mallat_wavelet_2008} (\ie, the polynomials of order $p-1$ belong to $V_0$) and $\phi \in
H^s(\R^2)$ for $s=1,2$. Therefore, we have the following result (see \cite[Corollary
3.4.1]{cohen_numerical_2003}): For any $f\in H^t(\Omega), s<t \leq p$
\begin{align}
  \label{eq:inverse_estime_Hst}
  \norm{P_j f-f}_{H^s(\Omega)} \lesssim 2^{j(t-s)} \norm{f}_{H^{t}(\Omega)} \ \text{ as } j
  \rightarrow -\infty.
\end{align}
Then the estimate \eqref{eq:bernstein-poly-wvl} is fulfilled with $u(s,t)=(t-s)/2$. By abuse of
notation, throughout this paper, we still use $P_K$ (with $K\propto 2^{-2j}$) to denote the
projection for the wavelet basis.

\subsubsection{Truncation of $\TauD$}
\label{sec:truncation-taud} 

Thanks to the boundedness of $\Tau$ and property \eqref{eq:Basis_completeness}, one can verify
easily that for any $f\in \HtO, g\in \HoO$
\begin{align}
  \label{eq:TauD_fg_TauD_fkgk}
  \TauD(f,g) = \Tau(P_K f, P_K g) + o(1),
\end{align}
with the truncation error $o(1)$ decaying to zero as $K\rightarrow +\infty$.  Using the
approximation property \eqref{eq:bernstein-poly-wvl}, a bound on the truncation error $o(1)$ can be
established.
\begin{prop}
  \label{prop:WPT_Tau_apprx}
  Suppose that the basis $\Xbasis$ fulfills the conditions \eqref{eq:Basis_completeness} and
  \eqref{eq:bernstein-poly-wvl}. Let $\tilde u(t,t'):=\min(u(2,t), u(1,t'))$ with the constants
  $t>2$ and $t'>1$ being those of estimate \eqref{eq:bernstein-poly-wvl}. Then for any $f\in
  H^t(\Omega), g\in H^{t'}(\Omega)$
  \begin{align}
    \label{eq:Tau_apprx_Tau_error}
    \Abs{\TauD(P_K f,P_K g) - \TauD(f,g)} \leq C K^{-\tilde u(t,t')} \ \text{ as } K\rightarrow +\infty,
  \end{align}
  where the constant $C$ depends only on $f,g, t,$ and  $t'$.
\end{prop}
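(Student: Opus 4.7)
The plan is to exploit the bilinearity of $\mathcal{T}_D$ to reduce the problem to a sum of two error terms, each of which can be controlled by one of the Bernstein-type estimates in \eqref{eq:bernstein-poly-wvl}. Concretely, I would write
\begin{align*}
  \mathcal{T}_D(P_K f, P_K g) - \mathcal{T}_D(f,g) = \mathcal{T}_D(P_K f - f,\, P_K g) + \mathcal{T}_D(f,\, P_K g - g),
\end{align*}
so that the boundedness inequality $|\mathcal{T}_D(\varphi,\psi)| \leq C\|\varphi\|_{H^2(\Omega)}\|\psi\|_{H^1(\Omega)}$ established just after \eqref{eq:def_Tau_op} applies to each piece separately.

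Next, I would bound the two factors in each term. Applying \eqref{eq:bernstein-poly-wvl} with $s=2$ and $s=1$ respectively gives
\begin{align*}
  \|P_K f - f\|_{H^2(\Omega)} &\leq C K^{-u(2,t)} \|f\|_{H^t(\Omega)}, \\
  \|P_K g - g\|_{H^1(\Omega)} &\leq C K^{-u(1,t')} \|g\|_{H^{t'}(\Omega)}.
\end{align*}
For the remaining factor $\|P_K g\|_{H^1(\Omega)}$ in the first term, the triangle inequality combined with the bound just displayed gives $\|P_K g\|_{H^1(\Omega)} \leq \|g\|_{H^1(\Omega)} + CK^{-u(1,t')}\|g\|_{H^{t'}(\Omega)}$, which is uniformly bounded in $K$ since $u(1,t')>0$. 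Combining everything yields
\begin{align*}
  |\mathcal{T}_D(P_K f, P_K g) - \mathcal{T}_D(f,g)| \leq C\bigl( K^{-u(2,t)} + K^{-u(1,t')} \bigr),
\end{align*}
and the right-hand side is, up to a constant, $K^{-\tilde u(t,t')}$ with $\tilde u(t,t')=\min(u(2,t),u(1,t'))$.

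There is no real obstacle here; the argument is essentially a standard tensorisation of a one-variable approximation estimate against a bounded bilinear form. The only mildly delicate point is ensuring the uniform bound on $\|P_K g\|_{H^1(\Omega)}$, which as noted above follows for free from the Jackson estimate \eqref{eq:bernstein-poly-wvl} applied at $s=1$, without needing to invoke the $H^1$-stability of $P_K$ as an independent property of the basis. Tracking the dependence of the constant on $f,g,t,t'$ (coming through $\|f\|_{H^t(\Omega)}$, $\|g\|_{H^{t'}(\Omega)}$, $\|g\|_{H^1(\Omega)}$ and the operator norm of $\mathcal{T}_D$) is a routine bookkeeping step.
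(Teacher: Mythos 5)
Your proposal is correct and follows essentially the same route as the paper: the same bilinear splitting $\TauD(P_Kf-f,P_Kg)+\TauD(f,P_Kg-g)$, the boundedness of $\TauD$ on each piece, and the Jackson estimate \eqref{eq:bernstein-poly-wvl} applied at $s=2$ and $s=1$. The only cosmetic difference is that you bound $\|P_Kg\|_{H^1(\Omega)}$ uniformly via the triangle inequality and \eqref{eq:bernstein-poly-wvl}, whereas the paper invokes the convergence \eqref{eq:Basis_completeness}; both are valid.
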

\begin{proof}
  By the triangle inequality
  \begin{align}
    \label{eq:Trunc_err_TauD_triangl}
    \Abs{\TauD(P_Kf, P_Kg) - \TauD(f, g)} &\leq \Abs{\TauD(P_Kf-f, P_Kg)} + \Abs{\TauD(f, P_Kg -g)}.
  \end{align}
  Using the  boundedness of $\TauD$ on the first term of the right-hand side, we get
  \begin{align*}
    \Abs{\TauD(P_Kf-f, P_Kg)} \leq C \norm{P_K f-f}_{H^2} \norm{P_K g}_{H^1}.
  \end{align*}
  On one hand, one can apply \eqref{eq:bernstein-poly-wvl} on $\norm{P_K f-f}_{H^2}$ with the
  constant $t>2$ verifying $u(2,t)>0$. On the other hand, for any $t'>1$ and $g\in H^{t'}(\Omega)$
  we have $P_K g\rightarrow g$ in $H^1(\Omega)$ by \eqref{eq:Basis_completeness}. Therefore, we
  obtain that
  \begin{align*}
    \Abs{\TauD(P_Kf-f, P_Kg)} \lesssim K^{-u(2,t)} \norm{f}_{H^t} \norm{g}_{H^1}.
  \end{align*}
  Similarly, for the second term of the right-hand side in~\eqref{eq:Trunc_err_TauD_triangl} we get
  \begin{align*}
    \Abs{\TauD(f, P_Kg -g)} \leq C \norm{f}_{H^2} \norm{P_Kg-g}_{H^1} \lesssim K^{-u(1,t')}
    \norm{f}_{H^2}\norm{g}_{H^{t'}},
  \end{align*}
  which holds for any $g\in H^{t'}(\Omega)$ for the constant $t'>1$ of
  \eqref{eq:bernstein-poly-wvl}. Combining these two terms yields the desired result.
\end{proof}

\subsubsection{Coefficient matrix}
\label{sec:coefficient-matrix}
We define the coefficient matrix $\mX$ as follows
\begin{align}
  \label{eq:Xcoeff_mat}
  \mX = \mX[D,K] = {(\TauD(e_m, e_n))}_{mn} \ \text{ for } m,n=1\ldots K,
\end{align}
which represents $\TauD$ under the basis $\Xbasis$ up to the order $K$. We denote by $\mbf f$ the
coefficient vector of $P_\vidx f$, \ie,
\begin{align}
  \label{eq:vecf_PKf}
  P_\vidx f = \sum_{n\leq \vidx} {\mbf f}_n e_n,
\end{align}
and similarly $\mbf g$ for $P_\vidx g$. Then $\TauD$ restricted on $V_K$ can be put into the following matrix
form:
\begin{align}
  \label{eq:TauD_K_repr}
  \TauD(P_K f, P_K g) = \sum_{m,n=1}^K {\mbf f}_m\TauD(e_m, e_n) {\mbf g}_n = {\mbf f}^\top \mX
  {\mbf g}.
\end{align}
Thanks to property \eqref{eq:Basis_completeness} of the basis, there is a one-to-one mapping
between $\TauD$ and its coefficient matrix $\mX$ as $K\rightarrow +\infty$. Hence the domain $D$ is
also uniquely determined from $\mX[D]$ when $K\rightarrow +\infty$, as a consequence of Proposition
\ref{prop:charact-D}.

\subsection{Linear system}
\label{sec:linear-system-VLX}

In~\eqref{eq:Vsr_TauD}, the Green functions $\Gammas$ and $\Gammar$ play the role of the measurement
system while the information about $D$ is contained in the operator $\TauD$. This motivates us to
separate these two parts and extract information about $D$ from the data $\mbf V$.

By removing a small neighborhood of $x_s$ and $y_r$ if necessary, we can always assume that $\Omega$ does
not contain any source or receiver, in such a way that $\Gammas$ and $\Gammar$ restricted on
$\Omega$ become $\mcl C^\infty$, and hence can be represented using the basis $\Xbasis$ (note that since
$x_s,y_r\notin \p D$, removing the singularity does not affect $\TauD(\Gammas, \Gammar)$ which
depends only on the value of the  Green functions on $\p D$).

We denote in the sequel $\bgamma_{x_s}, \bgamma_{y_r}\in\R^K$ the (column) coefficient vectors of
$P_K \Gammas$ and $P_K\Gammar$, respectively. From \eqref{eq:Vsr_TauD}, \eqref{eq:TauD_fg_TauD_fkgk}
and \eqref{eq:TauD_K_repr} one can write
\begin{align}
  \label{eq:Vsr_bgamma_E}
  V_{sr} = \TauD (\Gammas, \Gammar) = \bgammas^\top \mX \bgammar + E_{sr}
\end{align}
with $E_{sr}$ being the truncation error of order $K$ which can be controlled using Proposition
\ref{prop:WPT_Tau_apprx}. We introduce the matrices of the measurement system
\begin{align}
  \label{eq:Gamma_x_y_mat}
  \mGx = [\bgamma_{x_1} \ldots \bgamma_{x_{N_s}}], \ \ \mGy = [\bgamma_{y_1} \ldots \bgamma_{y_{N_r}}],
\end{align}
as well as  the linear operator $\mapop{\mL}{\R^{K\times K}}{\R^{N_s \times N_r}}$:
\begin{align}
  \label{eq:def_linop_L}
  \mL(\mX) = {\mGx}^\top \mX \mGy .
\end{align}
Then \eqref{eq:Vsr_bgamma_E} can be put into a matrix product form:
\begin{align}
  \label{eq:MSR_TauDX_linsys0}
  \V = \mGx^\top \mX \mGy + \mE = \mL(\mX) + \mE
\end{align}
with $\mbf E=(E_{sr})_{sr}$ being the matrix of truncation error. Further, suppose that $\V$ is
contaminated by some measurement noise $\Wnoise$, \ie, the $m,n$-th coefficient follows the normal
distribution
\begin{align*}
  (\Wnoise)_{mn} \iid \normallaw {\snoise}
\end{align*}
with $\snoise>0$ being the noise level. Using the bound \eqref{eq:Tau_apprx_Tau_error} of the
truncation error,  we can assume that for a large
$K$ 
\begin{align}
  \label{eq:regime_Esr_noise}
  \abs{E_{sr}} \ll \snoise
\end{align}
uniformly in all $s$ and $r$, so that $\mE$ can be neglected compared to the noise. Finally, we
obtain a linear system relating the coefficient matrix to the data
\begin{align}
  \label{eq:MSR_TauDX_linsys}
  \V = \mL(\mX) + \Wnoise,
\end{align}
and the objective is then  to estimate $\mX$ from $\V$ by solving \eqref{eq:MSR_TauDX_linsys}.





\subsubsection{Measurement systems and stability}
\label{sec:stab-inter-meas}
The stability of the operator $\mL$ is inherent to the spatial distribution of sources and receivers
that we suppose to be coincident in what follows. The far-field measurement system
(Figure~\ref{fig:acq_systems} (a)) is the situation when the characteristic distance $\rho$ between
transmitters and the boundary of the target is much larger than the size $\delta$ of the target. On
the contrary, in the near-field internal measurement system (Figure~\ref{fig:acq_systems} (b)) which
is used in micro-EIT \cite{seo, pnas}, we have $\rho\ll \delta$ and the transmitters can be placed
``inside'' the target. Other types of far-field measurements exist;  see
\cite{ammari_modeling_2012,ammari_shape_2013}.

\def\figwidth{5.5cm}
\begin{figure}[htp]
  \centering
  \subfigure[Far-field measurement]{\includegraphics[width=\figwidth]{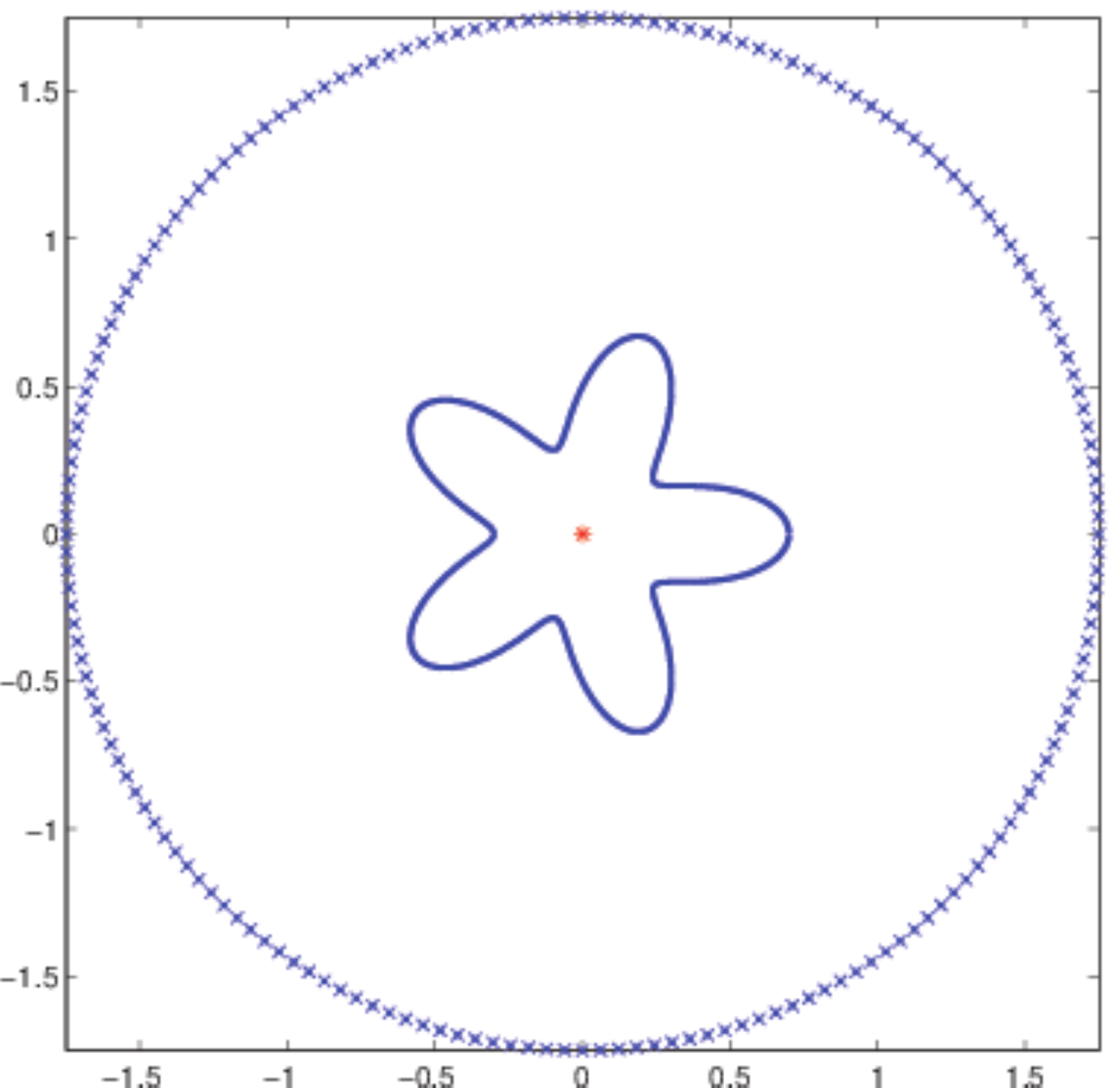}}
  \subfigure[Near-field measurement]{\includegraphics[width=\figwidth]{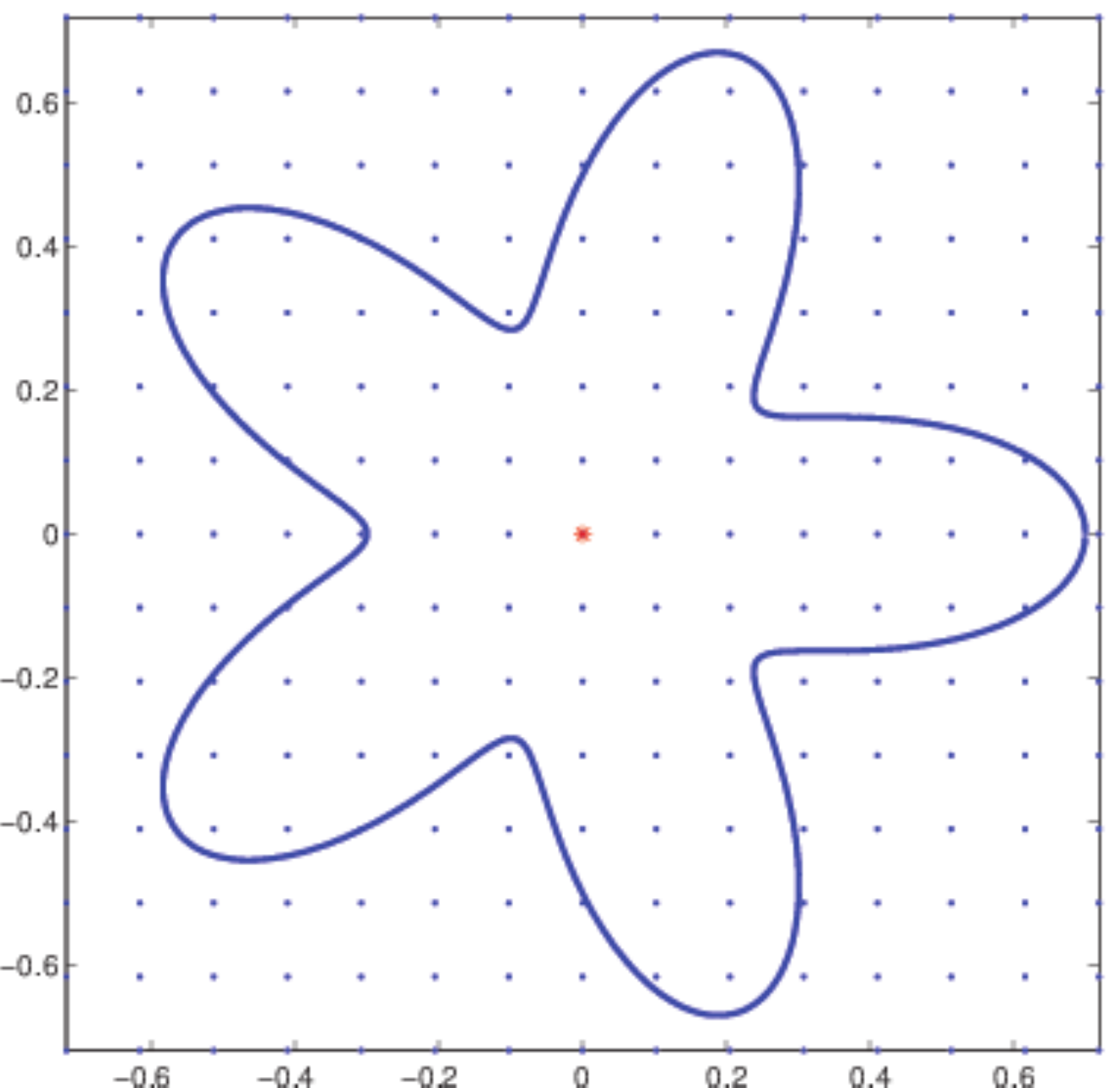}}
  \caption{Far-field and near-field measurement systems with a flower-shaped target $D$. (a): 360
    coincided sources and receivers (cross) are uniformly distributed on a circle including the
    target. (b): 15$\times$ 15 coincided sources and receivers (dots) are uniformly distributed
    inside $\Omega$, and the minimal distance to $\p D$ is $\sim 10^{-3}$. The red cross in both
    figures marks the center of $\Omega$.}
  \label{fig:acq_systems}
\end{figure}

Note that a singular value of $\mL$ is the product of a pair of singular values of $\mGx$ and
$\mGy$. In Figure~\ref{fig:svd_acq_systems} we compare the distribution of the singular values of the
operator $\mL$ (computed with the Daubechies wavelet of order 6 as $\Xbasis$) corresponding to the
systems of Figure~\ref{fig:acq_systems}. One can notice the substantial difference between these two
systems: the singular values of the far-field system decays very fast, revealing the exponential
ill-posedness of the associated inverse problem \cite{ammari_target_2012,ammari_tracking_2012,
  ammari_generalized_2011}. On the contrary, for the near-field system,  the stability of $\mL$ is
considerably improved.  This can be explained by the decay of the wavelet coefficients of the Green
function. In fact, $\Gammas$ is smooth away from $x_s$, therefore most rows (populated by detail
wavelet coefficients) in $\mGx$ have tiny numerical values which make the matrix
ill-conditioned. More precisely, the following result holds. We refer to   Appendix
\ref{sec:proof-decay-wvl-coeff}) for its proof. 
\begin{prop}
  \label{prop:decay_wavelet_coeff_Gammas}
  Let $\COmega$ be a compact domain. Suppose that $x_s\notin\COmega$ and denote by $\rho$ the
  distance between $x_s$ and $\COmega$. If the wavelet $\psi^k, k=1,2,3$ has $p>0$ zero moments, then
  as $j\rightarrow -\infty$:
  \begin{align}
    \label{eq:innerprod_gammas_psi_one_term}
    \abs{\seq{\Gammas, \psikjn}} \asymp 2^{j(p+1)} \rho^{-p} \text{ for } n\in \Lambda_j^k,
  \end{align}
  where $\Lambda_j^k:=\set{n\in\Z^2\, |\, \text{ the support of } \psikjn \text{ intersects with }
    \COmega}$.
\end{prop}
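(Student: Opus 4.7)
The plan is to combine the $p$ vanishing moments of $\psi^k$ with a Taylor expansion of the smooth function $\Gamma_{x_s}$ around the center of the shrinking support of $\psi^k_{j,n}$. Fix $n \in \Lambda_j^k$ and write $x_{j,n} := 2^j n$, so that the support of $\psi^k_{j,n}$ is contained in $x_{j,n} + 2^j\cdot\mathrm{supp}(\psi^k)$, a set of diameter $O(2^j)$ lying at distance $\gtrsim \rho$ from $x_s$. On this set, $\Gamma_{x_s}$ is $\mathcal{C}^\infty$, and the $\alpha$-th derivative of $\log|x-x_s|$ is an explicit rational function in $x-x_s$ of size comparable to $|x-x_s|^{-|\alpha|} \asymp \rho^{-|\alpha|}$.

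Next I would Taylor expand:
\begin{equation*}
  \Gamma_{x_s}(x) = \sum_{|\alpha| < p} \frac{\partial^\alpha\Gamma_{x_s}(x_{j,n})}{\alpha!}(x-x_{j,n})^\alpha + R_p(x),
\end{equation*}
with $|R_p(x)| \leq C\, \rho^{-p}\, |x-x_{j,n}|^p$ by the derivative bound above. Because $\tilde\psi$ has $p$ vanishing moments, the tensor-product wavelets $\psi^k$ ($k=1,2,3$) annihilate every polynomial of total degree strictly less than $p$; hence the polynomial part of the expansion drops out and $\langle \Gamma_{x_s},\psi^k_{j,n}\rangle = \langle R_p, \psi^k_{j,n}\rangle$. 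Estimating this with the change of variables $y = 2^{-j}x - n$,
\begin{equation*}
  \int |x-x_{j,n}|^p |\psi^k_{j,n}(x)|\,dx = 2^{-j}\cdot 2^{jp}\cdot 2^{2j}\int |y|^p |\psi^k(y)|\,dy = C\,2^{j(p+1)},
\end{equation*}
which combined with $|R_p| \lesssim \rho^{-p}|x-x_{j,n}|^p$ gives the upper bound $|\langle \Gamma_{x_s},\psi^k_{j,n}\rangle| \lesssim 2^{j(p+1)}\rho^{-p}$.

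For the matching lower bound I would isolate the leading non-vanishing term of the expansion, namely the homogeneous degree-$p$ piece. For a tensor-product wavelet such as $\psi^1 = \tilde\psi\otimes\tilde\phi$, the moment $\int \psi^1(y)\, y_1^p\, dy = (\int \tilde\psi(t)\, t^p\, dt)\cdot(\int \tilde\phi(t)\, dt)$ is the product of the first non-vanishing moment of $\tilde\psi$ and $\int\tilde\phi=1$, so it is non-zero; analogous statements hold for $k=2,3$. Combined with the explicit computation that some $p$-th partial derivative of $\log|x_{j,n}-x_s|$ is comparable to $\rho^{-p}$ in absolute value (a fact one reads off the closed-form expression of $\partial^\alpha \log|z|$), the degree-$p$ contribution to $\langle \Gamma_{x_s},\psi^k_{j,n}\rangle$ is of size exactly $2^{j(p+1)}\rho^{-p}$ (up to an $n$-independent constant), while the higher-order remainder is $O(2^{j(p+2)}\rho^{-p-1})$ and thus negligible as $j\to-\infty$. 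This establishes the two-sided estimate $|\langle \Gamma_{x_s},\psi^k_{j,n}\rangle| \asymp 2^{j(p+1)}\rho^{-p}$.

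The main obstacle is the lower bound: the upper bound is the standard vanishing-moment Taylor argument, but the matching lower bound requires checking that the leading-order coefficient does not accidentally vanish. This reduces to the explicit non-degeneracy statements above (non-vanishing of a $p$-th moment of the tensor-product wavelet, and non-vanishing of a $p$-th partial of $\log|\cdot|$), both of which can be verified by direct computation.
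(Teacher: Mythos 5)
Your proposal is correct and follows essentially the same route as the paper's proof: Taylor expansion of $\Gammas$ to order $p-1$ about $2^jn$, annihilation of the polynomial part by the $p$ vanishing moments, a rescaling that extracts the factor $2^{j(p+1)}$, and the estimate $\abs{\p^\alpha \Gamma(x)}\asymp \norm{x}^{-\abs\alpha}$ at distance $\asymp\rho$ from $x_s$. Your treatment of the lower bound (isolating the degree-$p$ term and checking that the $p$-th moment of the tensor-product wavelet does not vanish) is in fact slightly more explicit than the paper's, which obtains the two-sided bound directly from the integral remainder and the asserted two-sided estimate on $\p^\alpha\Gamma$.
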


\begin{figure}[htp]
  \centering
  {\includegraphics[width=8cm]{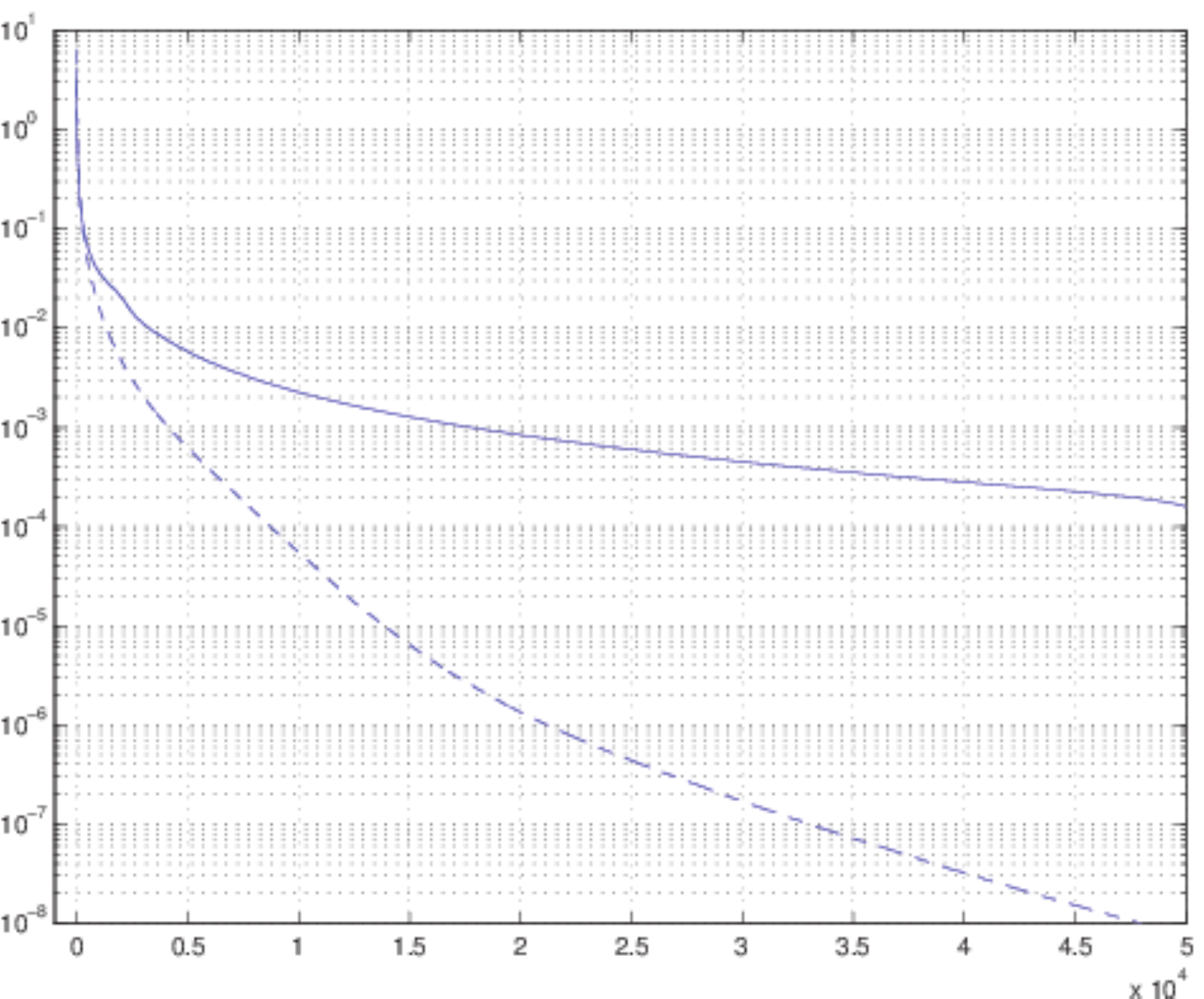}}
  \caption{First $5\times 10^4$ singular values of $\mL$ computed with a wavelet basis. The curve with
    dashed and solid line correspond respectively to the far-field and the near-field measurement
    systems of Figure~\ref{fig:acq_systems}.}
  \label{fig:svd_acq_systems}
\end{figure}

As a consequence of the stability, the data $\V$ of the far-field measurement reside in a low
dimensional subspace while those of the near-field measurement reside in a high dimensional
subspace. Therefore, the type of estimation of $\mX$ and \emph{afortiori} the type of basis for the
implementation of estimator, should be adapted to the physical configuration of the measurement
system. In the next sections, we show that in the case of the far-field measurement, the polynomial
basis with linear estimation is well suited, while for the near-field measurement case it is possible to
use a wavelet basis which creates a high dimensional but sparse matrix $\mX$, and to reconstruct more
information by a nonlinear estimator.

\section{Polynomial basis and linear estimation} 
\label{sec:poly-basis-shape-dico}

Under the homogeneous polynomial basis, the coefficients in $\mX$ are given as follows:
\begin{equation}
  \label{eq:GPT_def}
  \mX_{\alpha \beta} := \TauD(x^\alpha, x^\beta)
\end{equation}
with $\alpha,\beta$ being multi-indices. These $\mX_{\alpha \beta}$ are also referred as
\emph{Generalized Polarization Tensors} (GPTs) in the perturbation theory of small inclusions
\cite{ammari_mathematical_2013, ammari_polarization_2007}. The coefficient vectors $\bgamma_{x_s}$
and $\bgamma_{y_r}$ in \eqref{eq:Vsr_bgamma_E} are now obtained by the Taylor expansion of the Green
functions:
\begin{align}
  \label{eq:GPT_expansion}
  V_{sr} = \sum_{\substack{\abs\alpha, \abs\beta=1}}^K
  \frac{(-1)^{\abs\alpha+\abs\beta}}{\alpha!\beta!} \p^\alpha \Gamma(x_s) \mX_{\alpha\beta}[D]
  \p^\beta\Gamma(y_r) + E_{sr}.
\end{align}
Moreover, the truncation error $E_{sr}$ can be expressed explicitly, and in case of the far-field
measurement it decays as $O((\delta/\rho)^{K+2})$ \cite{ammari_target_2012} (with $\delta$ being the
size of $D$ and $\rho>\delta$ being the radius of the measurement circle), which is far better than
the previous bound \eqref{eq:Tau_apprx_Tau_error}. 

Expression \eqref{eq:GPT_expansion} can be simplified to the matrix product form
\eqref{eq:MSR_TauDX_linsys0} by recombining all $\mX_{\alpha\beta}$ of order $\abs\alpha=m$ and
$\abs\beta=n$ using coefficients of harmonic polynomials. The resulting linear operator $\mbf L$ is
injective for the far-field system (Figure~\ref{fig:acq_systems} (a)) having $N_S>2K$ transmitters,
and its singular value decays as $\lambda_{mn}=O((mn)^{-1}(\delta/\rho)^{\frac{m+n}{2}})$; see
\cite{ammari_target_2012, ammari_tracking_2012}.

\subsection{Linear estimator of $\mX$}
\label{sec:linear-estimator-mx}

Due to the global character of the polynomials, in general the coefficient matrix $\mX$ is full. On
the other hand, the fast decay of truncation error under the polynomial basis suggests that the
energy of $\mX$ is concentrated in the low order coefficients. Therefore, the simple truncation in
the reconstruction order provides an effective regularization, and the first $K$ order coefficients
can be estimated by solving the least-squares problem
\begin{align}
  \label{eq:lin_estime_CGPT}
  \Xest := \argmin_{\mX} \Norm{\mbf{L(\mX)-V}}^2_F,
\end{align}
where $\norm{\cdot}_F$ denotes the Frobenius norm. The following bound on the error of
the estimation can be established: for $m,n=1\ldots 2K$, 
\begin{align}
  \label{eq:var_lin_estime_CGPT}
  \sqrt{\Exp{\Paren{(\Xest)_{mn}-(\mX)_{mn}}^2}} \leq C \frac{\snoise}{N_S} mn
  {(\delta/\rho)}^{-\frac{m+n}{2}}.
\end{align}
As a consequence, the maximum resolving order $K$ is bounded by
\cite{ammari_target_2012}
\begin{align}
  \label{eq:max_K_linear}
  K\lesssim \log_{\delta/\rho}\snoise.
\end{align}
Hence, the far-field measurement has a very limited resolution. However, the first few orders of
coefficients contain important geometric information of the shape (\eg the first order tells how a
target resembles an equivalent ellipse), and can be used to construct shape descriptors for the
identification of shapes. We refer the reader to \cite{ammari_target_2012} for detailed numerical
results.

\section{Wavelet basis and nonlinear estimation} 
\label{sec:WPT}

In this section we use the wavelet (more exactly, the scaling function $\phi$) introduced in section
\ref{sec:basis-h1} for the representation $\TauD$ and the reconstruction of $\mX$. This yields a
sparse and local representation, and makes the wavelets an appropriate choice of basis for the
near-field measurement which allows the reconstruction via $\ell^1$ minimization and the visual
perception of $\p D$.

\subsection{Wavelet coefficient matrix}
\label{sec:wavel-coeff-matr}

For the wavelet basis, we use the scale number $L$ (in place of $K$ as in section
\ref{sec:repr-taud}) to denote the truncation order. The coefficient matrix $\mX=\mX[D,L,L]$ under
the wavelet basis contains the approximation coefficients
\begin{align}
  \label{eq:A_mat}
  \mX_{n,n'} &= \TauD(\phi_{L,n},\phi_{L,n'}),
\end{align}
so that $\TauD(P_L f,P_L g) = \mbf f^\top \mX \mbf g$ with $P_L$ being the orthogonal projector onto the
approximation space $V_L$ as introduced in section \ref{sec:comp-supp-wavel}, and $\mbf f, \mbf g$
are the coefficient vectors
\begin{align}
  \label{eq:Coeff_vec_std}
  \mbf f = [\set{\seq{f,\phi_{L,n}}}_n],\  \mbf g = [\set{\seq{g,\phi_{L,n}}}_n].
\end{align}
The matrix $\mX$ of a flower-shaped target is shown in Figure~\ref{fig:diag_local} (a).

\subsection{Properties of the wavelet coefficient matrix}
\label{sec:properties-mx}
In the following we discuss some important properties of the wavelet coefficient matrix.

\paragraph{Bounds of matrix norm}
\label{sec:bounds-normmx}
Proposition \ref{prop:bounds-mx} establishes bounds on the spectral norm of $\mX$ showing that
$\norm{\mX}_2$ diverges as $L\rightarrow -\infty$. The proof is based on the inverse estimate and
the polynomial exactness of the wavelet basis, and is given in
Appendix~\ref{sec:proof-proposition-bound-X}.
\begin{prop}
  \label{prop:bounds-mx}
  When $L\rightarrow -\infty$, the spectral norm of the matrix $\mX$ is bounded by
  \begin{align}
    \label{eq:bound_WPT_mat_l2}
    C' 2^{-L} \leq \norm{\mX}_2 &\leq C 2^{-3L}
  \end{align}
  with $C,C'>0$ being some constants independent of $L$.
\end{prop}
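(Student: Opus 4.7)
Both inequalities will be derived from the identity $\mathbf{f}^\top \mX \mathbf{g}=\TauD(f,g)$, where $f=\sum_n\mathbf{f}_n\phi_{L,n}$ and $g=\sum_n\mathbf{g}_n\phi_{L,n}$ lie in $V_L$, with $\|\mathbf{f}\|_{\ell^2}=\|f\|_{L^2}$ by orthonormality of $\{\phi_{L,n}\}_n$. \emph{Upper bound.} I combine (i) the boundedness $|\TauD(f,g)|\le C\|f\|_{H^2(\Omega)}\|g\|_{H^1(\Omega)}$ (already recorded in Section~\ref{sec:bilinear-form-taud}, arising from the trace theorem and the $L^2(\p D)$-boundedness of $(\lambda I-\Kstar D)^{-1}$) with (ii) the standard Bernstein (inverse) estimate for the wavelet space,
\[
\|v\|_{H^s(\Omega)}\le C\,2^{-sL}\|v\|_{L^2(\Omega)}\qquad \forall\,v\in V_L,\ s\in\{1,2\},
\]
which follows by scaling from $\phi_{L,n}(x)=2^{-L}\phi(2^{-L}x-n)$ together with $\phi\in H^2$. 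Applied to unit-$L^2$ $f,g\in V_L$ with $s=2$ for $f$ and $s=1$ for $g$, these yield $|\mathbf{f}^\top \mX\mathbf{g}|\le C\,2^{-3L}$ and hence $\|\mX\|_2\le C\,2^{-3L}$.

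\emph{Lower bound.} Here I would exhibit an explicit test pair. The hypothesis $p>2$ implies that $V_L$ locally reproduces linear polynomials. Fix a nonzero direction $a\in\R^2$, a cutoff $\eta\in\mcl C_0^\infty(\Omega)$ identically equal to $1$ on a neighborhood of $\p D$, and set $f_0(x)=(a\cdot x)\eta(x)$; let $\mathbf{f}_L$ be the coefficient vector of $P_L f_0$, so $\|\mathbf{f}_L\|_{\ell^2}=\|P_L f_0\|_{L^2}=O(1)$ uniformly in $L$. Pick a lattice index $n_0$ whose centre $2^L n_0$ lies within $O(2^L)$ of a chosen point $x_\star\in\p D$. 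Using the $L^2(\p D)$-adjointness $(\lambda I-\Kstar D)^*=\lambda I-K_D$,
\[
\mathbf{e}_{n_0}^\top\mX\mathbf{f}_L=\TauD(\phi_{L,n_0},P_L f_0)=\int_{\p D}\frac{\p\phi_{L,n_0}}{\p\nu}\,\tilde h\,ds\;+\;o(2^{-L}),
\]
where $\tilde h:=(\lambda I-K_D)^{-1}[f_0|_{\p D}]$ is a fixed smooth function on $\p D$ and the $o(2^{-L})$ error comes from Proposition~\ref{prop:WPT_Tau_apprx} applied to $P_L f_0-f_0$ (combined with Bernstein on $\phi_{L,n_0}$, using that $f_0$ is smooth). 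A direct change of variables $u=2^{-L}(s-s_\star)$ along $\p D$, together with $|\nabla\phi_{L,n_0}|\sim 2^{-2L}$ on an arc of length $\sim 2^L$, gives $\int_{\p D}(\p\phi_{L,n_0}/\p\nu)\,ds\sim 2^{-L}\cdot I$ for a constant $I$ depending only on $\phi$ and the local geometry of $\p D$ at $x_\star$. Taylor-expanding $\tilde h$ about $x_\star$, the leading contribution is $\tilde h(x_\star)\cdot 2^{-L}\cdot I$, which is $\gtrsim 2^{-L}$ in absolute value for a generic choice of $a$ (making $\tilde h(x_\star)\ne 0$) and a generic offset (making $I\ne 0$). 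Dividing by $\|\mathbf{e}_{n_0}\|_{\ell^2}\|\mathbf{f}_L\|_{\ell^2}=O(1)$ yields $\|\mX\|_2\ge C'\,2^{-L}$.

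\emph{The main obstacle} is ensuring that the leading constant $I$ (or $\tilde h(x_\star)$) does not vanish for the chosen configuration. Generically they do not, but particular symmetries of $\phi$ or special offsets $x_\star-2^L n_0$ could accidentally cancel the first-moment integral; this is handled either by averaging over a small range of neighbouring $n_0$'s (not all of which can vanish simultaneously) or by choosing the offset generically. The secondary technical point is bounding the projection error by $o(2^{-L})$; because $f_0$ is smooth, the Jackson estimate~\eqref{eq:inverse_estime_Hst} provides decay $2^{L(t-1)}$ for arbitrarily large $t$, which combined with $\|\phi_{L,n_0}\|_{H^2}\lesssim 2^{-2L}$ is far smaller than the leading $2^{-L}$ as $L\rightarrow-\infty$.
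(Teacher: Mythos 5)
Your upper bound is correct and is essentially identical to the paper's: boundedness of $\TauD$ on $H^2\times H^1$ combined with the Bernstein inverse estimates $\norm{v}_{H^s}\lesssim 2^{-sL}\norm{v}_{L^2}$ on $V_L$ and the orthonormality of $\{\phi_{L,n}\}_n$. The lower bound is where you diverge, and where there is a genuine gap. Your argument reduces everything to the non-vanishing, uniformly in $L$, of the leading constant $I$ in $\int_{\partial D}\partial_\nu\phi_{L,n_0}\,ds\sim 2^{-L}I$. For a tensor-product scaling function this integral equals (to leading order, with $\partial D$ locally flattened) $2^{-L}\tilde\phi'(c')$, where $c'$ is the offset of the boundary relative to the lattice in the normal direction; this offset is dictated by $\partial D$ and the grid $2^L\Z^2$, varies with $L$ and with the position along $\partial D$, and is not at your disposal, so ``choosing the offset generically'' is not available. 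Your fallback of averaging over neighbouring $n_0$ is also problematic: summing over translates in the normal direction gives $\sum_{k}\tilde\phi'(c'-k)=\frac{d}{dc'}\sum_k\tilde\phi(c'-k)=0$ by the partition-of-unity property of the scaling function, so the contributions cancel exactly at leading order and one would instead need a uniform positive lower bound on $\max_k\abs{\tilde\phi'(c'-k)}$ over all offsets $c'$, together with control of the sign changes of $\tilde h$ and of curvature corrections. None of this is impossible, but it is precisely the hard part and it is left as an assertion. (The secondary points you raise are fine: the projection error is indeed $o(2^{-L})$ for $t\in(2,p]$, and $\tilde h$ can be arranged to be nonzero somewhere on $\partial D$.)

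The paper's proof of the lower bound avoids this non-degeneracy issue entirely by a different choice of test vector. It takes $f=x_1$ on a tubular neighbourhood $D_\eta$ of $\partial D$ of width $\eta\propto 2^L$ and $f=0$ outside, and uses the polynomial exactness of order $p-1\ge 1$ to conclude that $P_Lf$ coincides \emph{exactly} with $x_1$ on a smaller tube containing $\partial D$; hence $\mathbf f^\top\mX\mathbf f=\TauD(P_Lf,P_Lf)=\TauD(x_1,x_1)$ with no error term. The non-vanishing of this quantity is then not a genericity statement but the known lower bound on the first-order polarization tensor, $\Abs{\TauD(x^\alpha,x^\alpha)}\ge\frac{\abs{\kcst-1}}{\kcst+1}\abs{D}$. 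The factor $2^{-L}$ comes not from the size of a matrix entry but from the smallness of the test vector: $\norm{\mathbf f}_{\ell^2}^2=\norm{P_Lf}_{L^2}^2\le\norm{f}_{L^2}^2\lesssim 2^L$ because $f$ is supported in a tube of width $\propto 2^L$. If you want to salvage your single-column approach you must prove the uniform non-degeneracy of $I$; otherwise the paper's tube construction is the cleaner route.
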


\paragraph{Sparsity}
\label{sec:sparsity-mx}
From the definition of $\TauD$ we observe that $\TauD(\phijn,\phijnp)$ is non-zero only when the
support of both wavelets intersect $\p D$. Therefore, the non-zero coefficients of $\mX$ carry
geometric information on $\p D$. Moreover, $\mX$ is a sparse matrix. In fact, when the scale
$L\rightarrow -\infty$ there are $\sim 2^{-2L}$ wavelets contributing to $D$, so the dimension of
$X$ is $\sim 2^{-4L}$. On the other hand, the number of wavelets intersecting $\p D$ is $\sim
2^{-L}$. Hence, the number of non zero coefficients is about $2^{-2L}$ and the sparsity of $\mX$ is
asymptotically $2^{2L}$.

\paragraph{Band diagonal structure}
\label{sec:band-diag-struct}
Numerical computations show that the pattern of non-zeros in $\mX$ has a band diagonal structure.
The largest coefficients appear around several principal band diagonals in a regular manner that
reflects different situations of interaction between wavelets via the bilinear form $\TauD$, as
shown in Figure~\ref{fig:diag_local} (a). We notice that the major band diagonals describe the
interaction between a $\phi_{L,n}$ and its immediate neighbors (the width of the band is
proportional to the size of the support of $\phi_{L,n}$). In particular, the main diagonal
corresponds to the case $n=n'$, while the other band diagonals describe the interactions between
other non-overlapping $\phiLn$ and $\phiLnp$.

\begin{figure}[htp]
  \centering
  \subfigure[$\mX$]{\includegraphics[width=7.5cm]{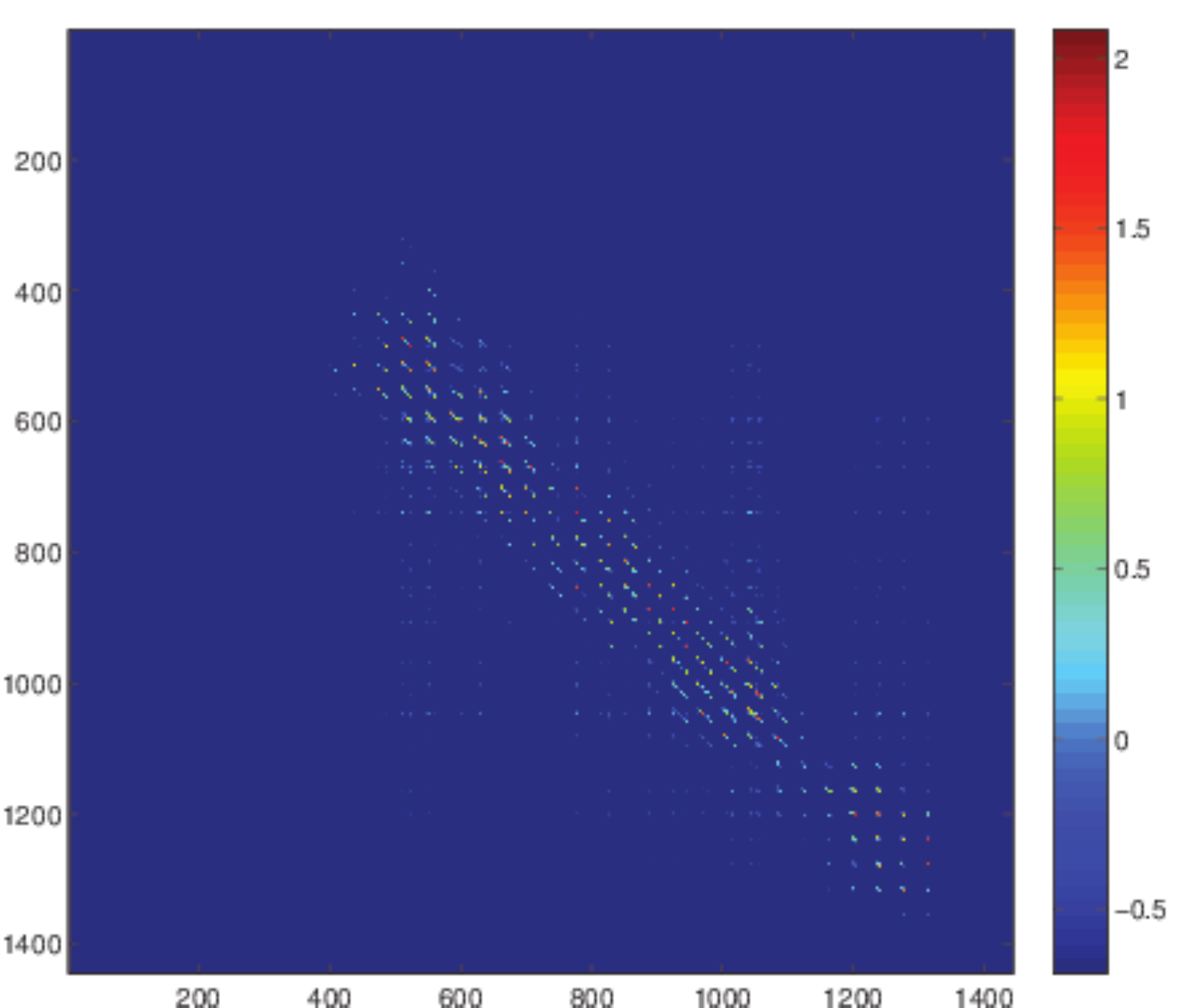}}
  \subfigure[Mask $\mbf M$]{\includegraphics[width=6.5cm]{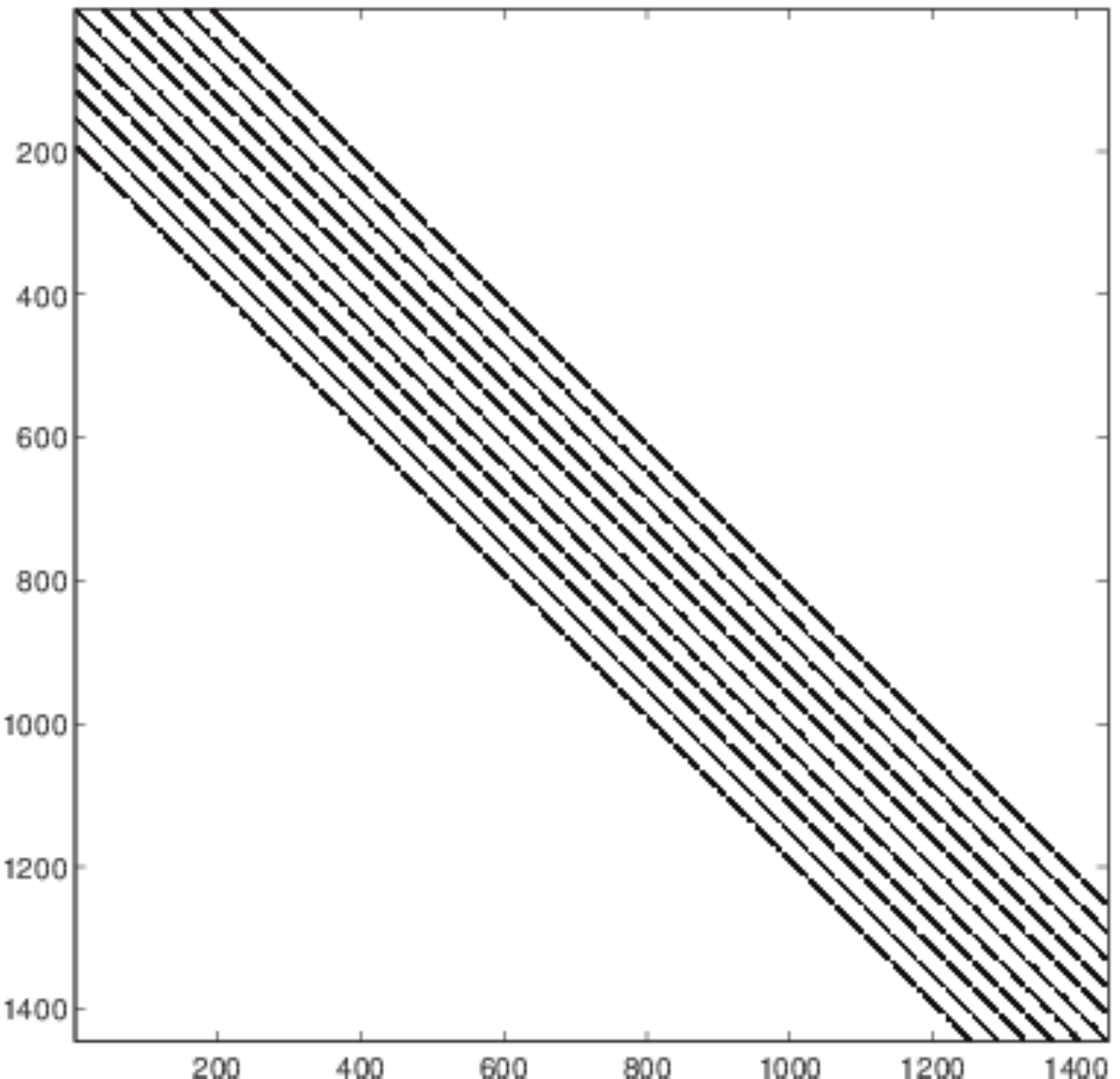}}
  \caption{(a): Wavelet coefficient matrix $\mX$ of a flower-shaped target with $L=-5$ computed
    using Daubechies wavelet of order 6. Image is the amplitude of coefficients in logarithmic
    scale. Only the first $5\permil$ of largest coefficients (in magnitude) are shown, and the
    relative error of the $N$-term approximation is $\sim 3\%$. The number of wavelets contributing
    to $\Omega$ is $38\times 38$. (b): Diagonal mask $\mbf M$ for the estimation of $\mX$ (11 band
    diagonals). There are $\sim 7\%$ non-zeros in $\mbf M$ and the relative error of approximation
    of $\mX$ with this mask is $\sim 3\%$.}
  \label{fig:diag_local}
\end{figure}

\paragraph{Localization of $\p D$}
\label{sec:localization-prop}
Numerical evidence further suggests that the strongest coefficients appear around the major band
diagonal of $\set{\TauD(\phiLn, \phiLnp)}_{n,n'}$, \ie, when $n$ and $n'$ are close. Therefore, a
large value in the diagonal coefficient ${\TauD(\phiLn, \phiLn)}$ indicates the presence of $\p D$
in the support of $\phiLn$. We plot the $815$-th column of $\mX$ in
Figure~\ref{fig:diag_local_one_row}, which correspond to the interaction between $\phiLn, n=[17,21]$
and all others $\phiLnp, n'\in\Z^2$.  We call this the \emph{localization} property of $\mX$. It
indicates that the operator $\lKstari D$ can loosely preserve the essential support of a localized
$L^2(\p D)$ function. The next proposition gives a qualitative explanation when 
$D$ is the unit
disk. 

\begin{prop}\label{prop:localization-p-d}
  Let $D$ be a unit disk. As $L\rightarrow -\infty$ we have
  \begin{align}
    \label{eq:bound_coeff_disk}
    \abs{\TauD(\phiLn, \phiLnp)} =
    \begin{cases}
      O(2^{-2L}) & \text{ \rm for overlapped } \phiLn, \phiLnp,\\
      O(2^{-L}) & \text{ \rm otherwise}.
      \end{cases}
  \end{align}
\end{prop}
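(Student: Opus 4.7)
The plan is to exploit the very special spectral structure of $\mathcal{K}_D^*$ when $D$ is a disk, which reduces $\left(\lambda I-\mathcal{K}_D^*\right)^{-1}$ to an almost trivial operator, and then to reduce each bound to an elementary estimate on the size and support of $\phiLn$.

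First I would establish that for the unit disk and $x,y\in\partial D$ one has the geometric identity $\langle x-y,\nu_x\rangle/|x-y|^2 = 1/2$, because $\nu_x=x$ and $|x-y|^2=2-2\langle x,y\rangle$. Consequently $\mathcal{K}_D^*[\phi](x) = \frac{1}{4\pi}\int_{\partial D}\phi\,ds$, so that $\mathcal{K}_D^*$ has eigenvalue $1/2$ on constants and is identically zero on the closed subspace $L^2_0(\partial D)$ of mean-zero functions. Writing any $\phi \in L^2(\partial D)$ as $\phi = \bar\phi + \phi^\perp$ with $\bar\phi = \frac{1}{2\pi}\int_{\partial D}\phi\,ds$, one obtains
\begin{equation*}
(\lambda I - \mathcal{K}_D^*)^{-1}[\phi] = \frac{\bar\phi}{\lambda - 1/2} + \frac{\phi^\perp}{\lambda}.
\end{equation*}

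Applying this decomposition with $\phi = \partial_\nu\phiLn$ gives
\begin{equation*}
\TauD(\phiLn,\phiLnp) = \frac{\overline{\partial_\nu\phiLn}}{\lambda-1/2}\int_{\partial D}\phiLnp\,ds + \frac{1}{\lambda}\int_{\partial D}\phiLnp\,(\partial_\nu\phiLn - \overline{\partial_\nu\phiLn})\,ds.
\end{equation*}
I would then collect the elementary estimates on $\phiLn$ that drive everything. From $\phiLn(x)=2^{-L}\phi(2^{-L}x-n)$ one has $\|\phiLn\|_{L^\infty}\lesssim 2^{-L}$, $\|\nabla\phiLn\|_{L^\infty}\lesssim 2^{-2L}$, and $\phiLn$ is supported in a ball of radius $\sim 2^L$, so $|\partial D\cap\mathrm{supp}(\phiLn)|\lesssim 2^L$.

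The key scalar quantity is $\overline{\partial_\nu\phiLn}$. By the divergence theorem, $\int_{\partial D}\partial_\nu\phiLn\,ds = \int_D\Delta\phiLn\,dx$, and I would bound this either by $\|\partial_\nu\phiLn\|_{L^\infty}\cdot|\partial D\cap\mathrm{supp}(\phiLn)|\lesssim 2^{-2L}\cdot 2^L=2^{-L}$, or equivalently observe that the integral vanishes if $\mathrm{supp}(\phiLn)$ lies entirely inside or outside $D$. Either way, $|\overline{\partial_\nu\phiLn}|\lesssim 2^{-L}$. Combined with $|\int_{\partial D}\phiLnp\,ds|\lesssim \|\phiLnp\|_{L^\infty}\cdot 2^L\lesssim 1$, the first term of the decomposition is $O(2^{-L})$ in both regimes.

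For the second term, I split into cases. If $\phiLn$ and $\phiLnp$ have disjoint supports on $\partial D$ then $\int_{\partial D}\phiLnp\,\partial_\nu\phiLn\,ds=0$, so this term reduces to $-\overline{\partial_\nu\phiLn}\int_{\partial D}\phiLnp\,ds = O(2^{-L})$, giving the second case of the proposition. If the supports overlap, the integrand is bounded by $\|\phiLnp\|_{L^\infty}\|\partial_\nu\phiLn\|_{L^\infty}\lesssim 2^{-L}\cdot 2^{-2L}=2^{-3L}$ on a set of length $\lesssim 2^L$, yielding $O(2^{-2L})$ which dominates the $O(2^{-L})$ of the first term. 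The main technical point to watch is the divergence-theorem bound on $\overline{\partial_\nu\phiLn}$: one must track carefully that it is the boundary-crossing length $\sim 2^L$ (rather than the area of the support $\sim 2^{2L}$) that controls the gain, since this is exactly what produces the $2^{-L}$ rather than a larger power.
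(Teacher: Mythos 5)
Your proposal is correct and follows essentially the same route as the paper: both exploit the rank-one form of $\Kstar D$ on the unit circle to invert $\lambda I - \Kstar D$ explicitly (your mean/mean-zero spectral decomposition is algebraically identical to the paper's identity $\lKstari D=\lambda^{-1}(I+(\kcst-1)\Kstar D)$), split $\TauD(\phiLn,\phiLnp)$ into a product-of-boundary-integrals term of size $O(2^{-L})$ and an overlap term of size $O(2^{-2L})$ (vanishing for disjoint supports), and conclude from the scalings $\norm{\phiLn}_{L^\infty}\lesssim 2^{-L}$, $\norm{\nabla\phiLn}_{L^\infty}\lesssim 2^{-2L}$ and the boundary-intersection length $\lesssim 2^{L}$. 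Your write-up is in fact slightly more explicit than the paper's about where each power of $2^{L}$ comes from.
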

\begin{proof}
  For $D$ being a unit disk, one has \cite{ammari_polarization_2007}:
  \begin{align*}
    \Kstarf D f(x) = \frac{1}{4\pi}\int_{\p D} f(y)ds(y).
  \end{align*}  
  By simple manipulations one can deduce that
  \begin{align}
    \label{eq:lKstari_on_cercle}
    \lKstarif D f (x) = \lambda^{-1}(I + (\kcst - 1)\Kstar D)(f)(x).
  \end{align}
  Therefore, the bilinear form $\TauD(\psif, \psig)$ is reduced to
  \begin{align}
    \label{eq:WPT_on_cercle}
    \int_{\p D} \psig \lKstarif D {\ddn {\psif}} ds = \frac 1 \lambda \int_{\p D}
    \psig\ddn{\psif}ds + \frac{\kcst -1}{4\pi\lambda}\int_{\p D} \psig\, 
    ds \int_{\p D}{\ddn {\psif}} ds.
  \end{align}
  Taking $\psif=\phiLn$ and $\psig=\phiLnp$ as $L\rightarrow -\infty$ the intersection between $\p
  D$ and the support of $\phiLn$ is well approximated by a line segment. 
  By a change of variables,  it
  follows that
  \begin{align}
    \label{eq:boundary_int_1}
    \intpd \phiLnp ds \intpd \p_\nu\phiLn ds = O(2^{-L}).
  \end{align}
  Similarly, one has
  \begin{align}
    \label{eq:boundary_int_2}
    \intpd \phiLnp\p_\nu\phiLn ds =
    \begin{cases}
      O(2^{-2L}) & \text{ for overlapped } \phiLn, \phiLnp,\\
      0 & \text{ otherwise}.
    \end{cases}
  \end{align}
  Substituting \eqref{eq:boundary_int_1} and \eqref{eq:boundary_int_2}  into
  \eqref{eq:WPT_on_cercle} yields \eqref{eq:bound_coeff_disk}.
\end{proof}
Hence, as $L\rightarrow -\infty$, the coefficients of the main diagonal behave like $O(2^{-2L})$,
and dominate the other band diagonals that behave like $O(2^{-L})$.

\begin{figure}[htp]
  \centering
  \subfigure[]{\includegraphics[width=7.5cm]{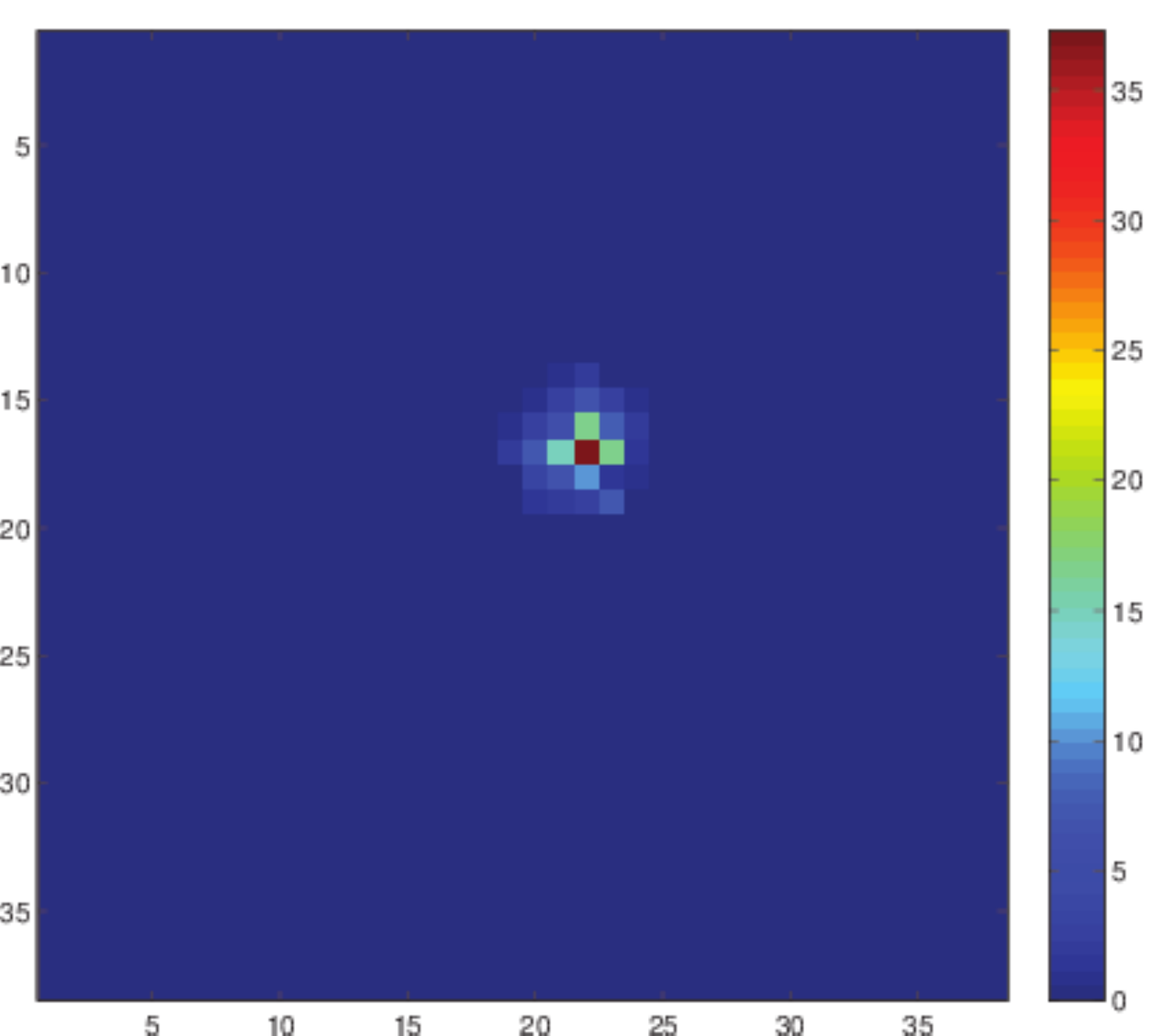}}
  \subfigure[]{\includegraphics[width=7.5cm]{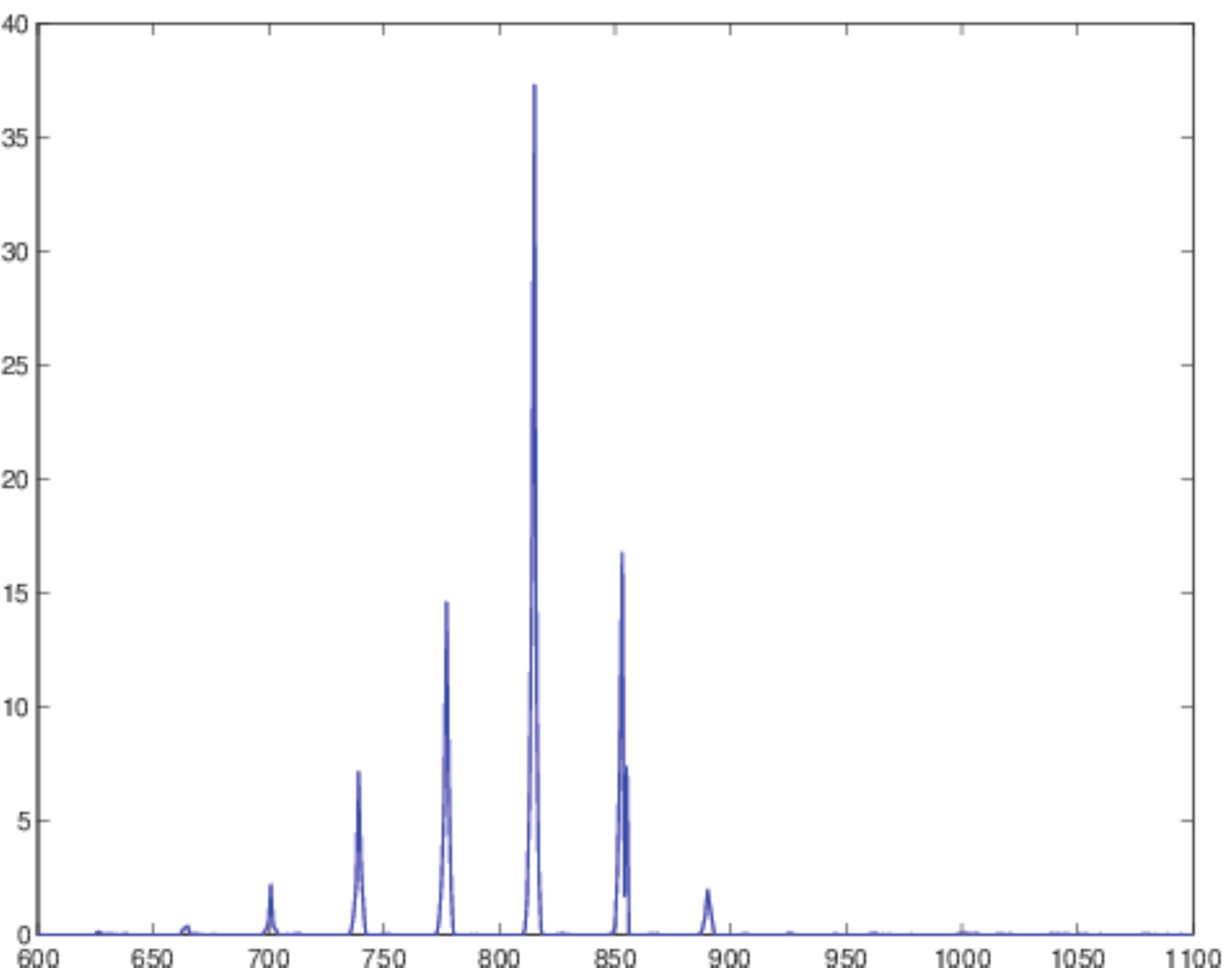}}
  \caption{Amplitude of $\set{\TauD(\phi_{L,n}, \phi_{L,n'})}_{n'}$ with $L=-4, n=[17,21]$ (or
    equivalently the 815-th row in Figure~\ref{fig:diag_local} (a)). The support of the wavelet
    $\phi_{L,n}$ intersects $\p D$. (a): View as an image with each pixel corresponding to one
    $n'$. (b): Amplitude as a function of the position index $n'$. The highest peak appears around
    $n'=815$.}
  \label{fig:diag_local_one_row}
\end{figure}

\subsection{Wavelet based imaging algorithms}
\label{sec:WPT_imaging_algo}

The localization property of $\mX$ can be used to visualize the target $D$. A simple algorithm,
called \emph{imaging by diagonal}, consists in taking the diagonal of $\mX$ (\ie, the coefficients
$\TauD(\phiLn, \phiLn)$) and reshaping it to a 2D image. Then the boundary of $\p D$ can be read off
from the image.

A drawback of this method is that the generated boundary has low resolution.  In fact, any $\phiLn$
touching the boundary $\p D$ is susceptible to yield a numerically non-negligible value of
$\TauD(\phiLn, \phiLn)$. Hence, larger is the support of the wavelet, more are the wavelets
intersecting $\p D$ and lower is the resolution. An improved method consists in searching for each
index $n\in\Z^2$, the index $n'$ maximizing the interaction between $\phiLn, \phiLnp$:
\begin{align}
  \label{eq:max_imaging_mn}
  n' = \argmax_{n'\in\Z^2} \abs{\TauD(\phiLn, \phi_{L,n'})},
\end{align}
and then accumulating $\abs{\TauD(\phiLn, \phi_{L,n'})}$ for the index $n$. We name this method
\emph{imaging by maximum}. It is higher in resolution since the effect of the wavelets touching
merely $\p D$ is absorbed by their closest neighbors lying on $\p D$. The procedure is 
summarized in
Algorithm~\ref{algo:max-imaging}.  Figures~\ref{fig:imaging_true_recon0} (a) and (b) show a comparison
between these two algorithms.

\begin{algorithm}[H]
  \caption{Imaging by maximum}
  \label{algo:max-imaging}
  \begin{algorithmic}
    \STATE Input: the matrix $\mX$ of an unknown shape $D$, a zero-valued matrix $I$.
    \FOR {$n \in\Z^2$}
    \STATE 1. $n' \leftarrow \argmax_{n' \in\Z^2} \ \abs{\mX_{n,n'}}$
    \STATE 2. $I(n) \leftarrow I(n) + \abs{\mX_{n,n'}}$;
    \ENDFOR
    \STATE Output: the 2D image $I$.
  \end{algorithmic}
\end{algorithm}

\graphicspath{{./figures/Imaging/}}

\begin{figure}[htp]
  \centering
  \subfigure[]{\includegraphics[width=7.5cm]{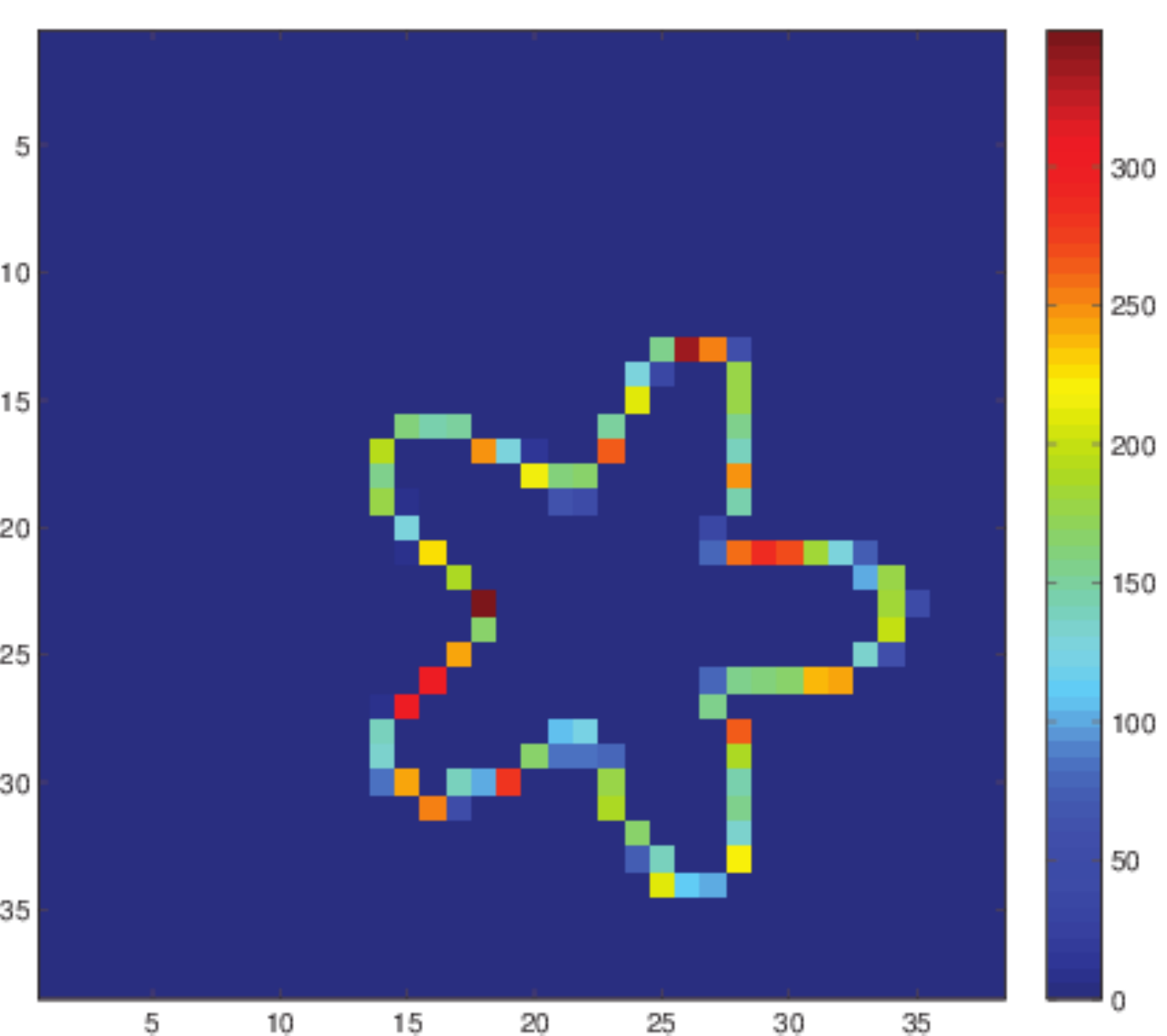}}
  \subfigure[]{\includegraphics[width=7.5cm]{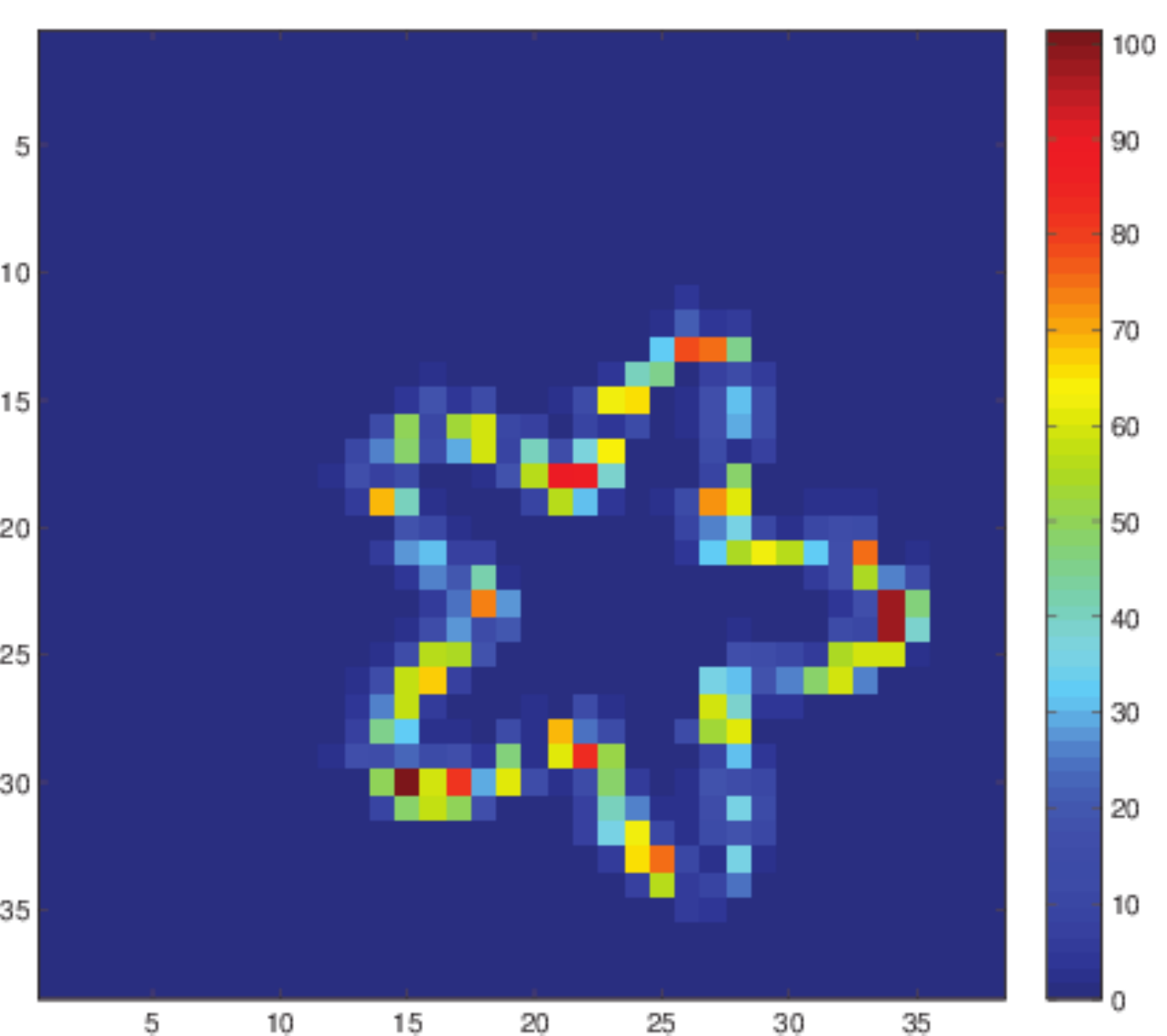}}
  \caption{Images obtained by the imaging algorithms \ref{sec:WPT_imaging_algo} applied on the true
    coefficients $\mX$ with $L=-4$. (a): Imaging by maximum. (b): Imaging by diagonal.}
  \label{fig:imaging_true_recon0}
\end{figure}

\subsection{Reconstruction of $\mX$ by $\ell^1$ minimization}
\label{sec:reconstr-mx-ell1}

As the scale $L$ decreases, the dimension of $\mX$ increases rapidly as $\propto 2^{-4L}$. On the
other hand, the band diagonal structure of $\mX$ shows that the largest coefficients distribute on
the major band diagonals, which is an important \emph{a priori} information allowing to reduce
considerably the dimension of the unknown to be reconstructed.

For this, we fix \emph{a priori} $N_0>0$ and assume that the coefficient $\TauD(\phiLn, \phiLnp)$
can be neglected when $\abs{n-n'}>N_0$. We construct accordingly a band diagonal mask $\mbf M$
taking values $0$ or $1$ by choosing $N_0$ proportional to the support size of the wavelet. Remark
that the mask constructed in this way is not adaptive and does not contain any information about the
boundary of the target. Figure~\ref{fig:diag_local} (b) shows a mask $\mbf M$ with $N_0=5$.

Given the high dimension of $\mX$ and its sparsity, we seek a sparse solution by solving the
$\ell^1$ minimization problem as follows \cite{mallat_wavelet_2008, chen_atomic_1998,
  candes_near-optimal_2006}:
\begin{align}
  \label{eq:l1_estim}
  \Xest:=\argmin_{\mX} \Norm{\mbf{L(M\odot X)-V}}^2 + \mu \Norm{\mbf M \odot \mX}_{1,w},
\end{align}
where $\mbf M$ is the band diagonal mask (Figure~\ref{fig:diag_local} (b)), $\odot$ denotes the
termwise multiplication, $\mu>0$ is the regularization parameter, and $\Norm{\mbf x}_{1,w} = \sum_n
w_n\abs{x_n}$ is the reweighted $\ell_1$ norm. We set the weight $w$ in such a way that the operator
$\mbf L$ is normalized columnwisely. The constant $\mu$ is determined by the universal threshold
\cite{mallat_wavelet_2008} (tuned manually to achieve the best result if necessary):
\begin{align}
  \label{eq:mu_universal_th}
  \mu \propto \snoise\sqrt{N_sN_r}\sqrt{2\log \norm{\mbf M}_1}
\end{align}
with $\norm{\mbf M}_1$ being the number of non-zero values in $\mbf M$. Problem \eqref{eq:l1_estim}
admits a unique sparse solution \cite{mallat_wavelet_2008} under appropriate conditions, and can be
solved numerically via efficient algorithms; see for example \cite{beck_fast_2009}.

\section{Numerical experiments}
\label{sec:num_exp} 

In this section we present some numerical results to illustrate the efficiency of the wavelet
imaging algorithm proposed in section~\ref{sec:WPT_imaging_algo}.  The wavelet used here is the
Daubechies wavelet of order 6. We consider a near-field measurement system
(Figure~\ref{fig:acq_systems} (b)) with $20\times 20$ uniformly distributed sources and receivers.

\subsection{Parameter settings}
\label{sec:parameter-settings}

We set the conductivity constant $\kcst=4/3$ and use a flower-shaped target as $D$. The whole
procedure of the experiment is as follows. First, the data $V_{sr}$ are simulated by evaluating
\eqref{eq:repr_solution} for all sources $x_s$ and receivers $y_r$ of the measurement system. A
white noise of level
\begin{align}
  \label{eq:snoise_V}
  \snoise=\sigma_0\norm{\V}_F/\sqrt{N_s N_r}
\end{align}
is added to obtain the noisy data $\V$, with $\sigma_0$ being the percentage of noise in
data. Thereafter the minimization problem \eqref{eq:l1_estim} is solved with the parameters and
methods described in section~\ref{sec:reconstr-mx-ell1}. Finally, from the reconstructed
coefficients $\mX$, we apply Algorithm~\ref{algo:max-imaging} to obtain a pixelized image of $\p D$.

The vectors $\bgammas, \bgammar$ in \eqref{eq:Vsr_bgamma_E} contain the wavelet coefficients of
the Green functions $\Gammas$ and $\Gammar$ respectively. They are computed by first sampling $\Gammas$
and $\Gammar$ on a fine Cartesian lattice of sampling step $\lesssim 2^{-11}$ on $\Omega$ (the
singularity point $x_s$ is numerically smoothed) and then applying the discrete fast wavelet
transform on these samples to obtain the coefficients at the desired scale.

\subsection{Results of the imaging algorithm}
\label{sec:numerical-results}
Figures~\ref{fig:imaging_recon_noisy} (a, b) show the results of imaging obtained at the scale $L=-4$
with different noise levels.  It can be seen that even in a highly noisy environment (\eg
$\sigma_0=100\%$) the boundary of $D$ can still be correctly located.

Remark that for the near-field internal measurement system Figure~\ref{fig:acq_systems} (b), one can
obtain an image of $\p D$ directly from the data $\V$ (in fact, $V_{sr}$ being defined by
\eqref{eq:repr_solution} has large amplitude if $x_s$ and/or $y_r$ is close to $\p D$). Nonetheless,
such a direct imaging method is far less robust to noise than the wavelet based algorithm and its
resolution is limited by the density of the transmitters, as shown in
Figures~\ref{fig:imaging_recon_noisy} (c, d).

\graphicspath{{./figures/Imaging/}}

\begin{figure}[htp]
  \centering
  \subfigure[]{\includegraphics[width=7.5cm]{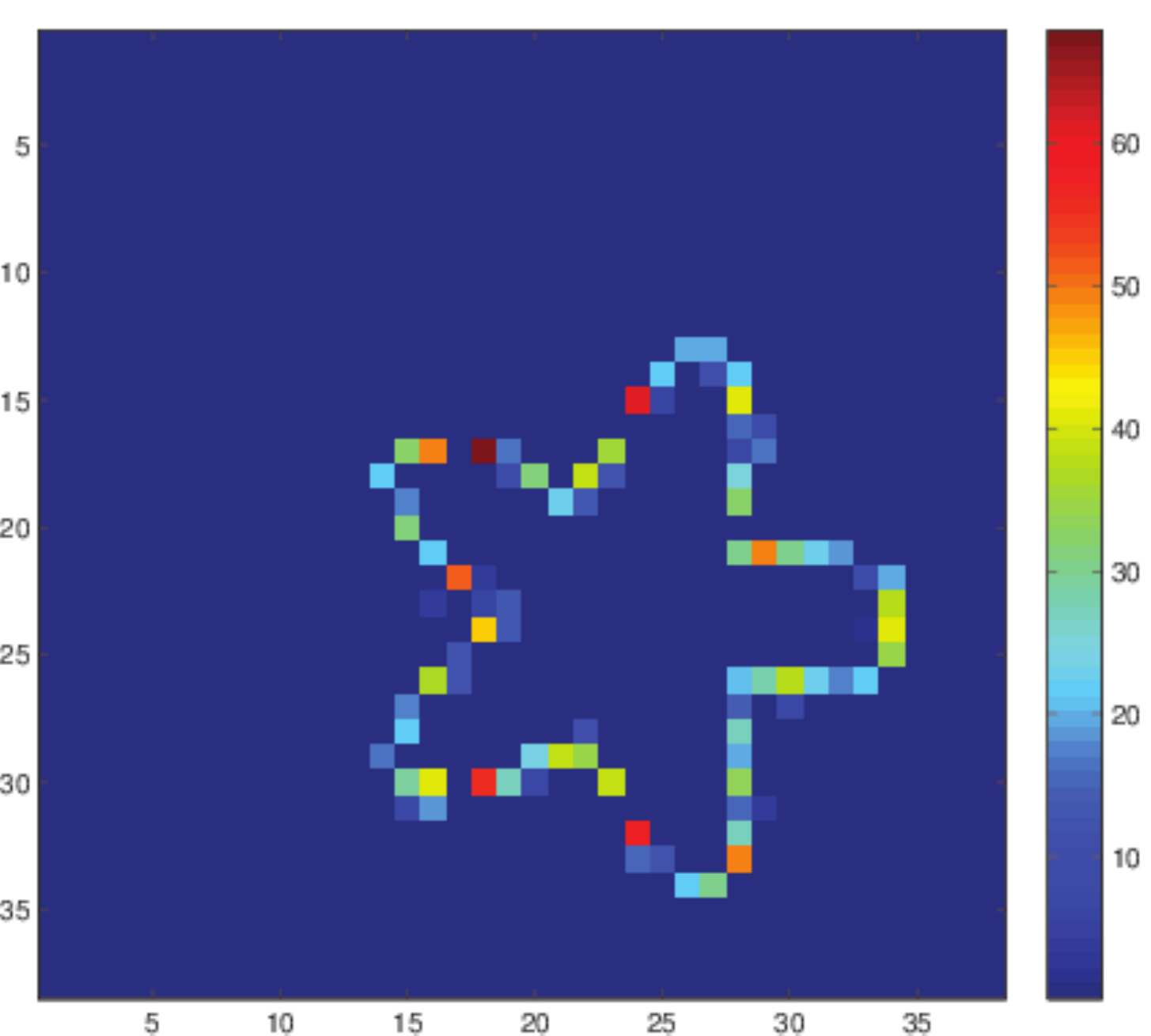}}
  \subfigure[]{\includegraphics[width=7.5cm]{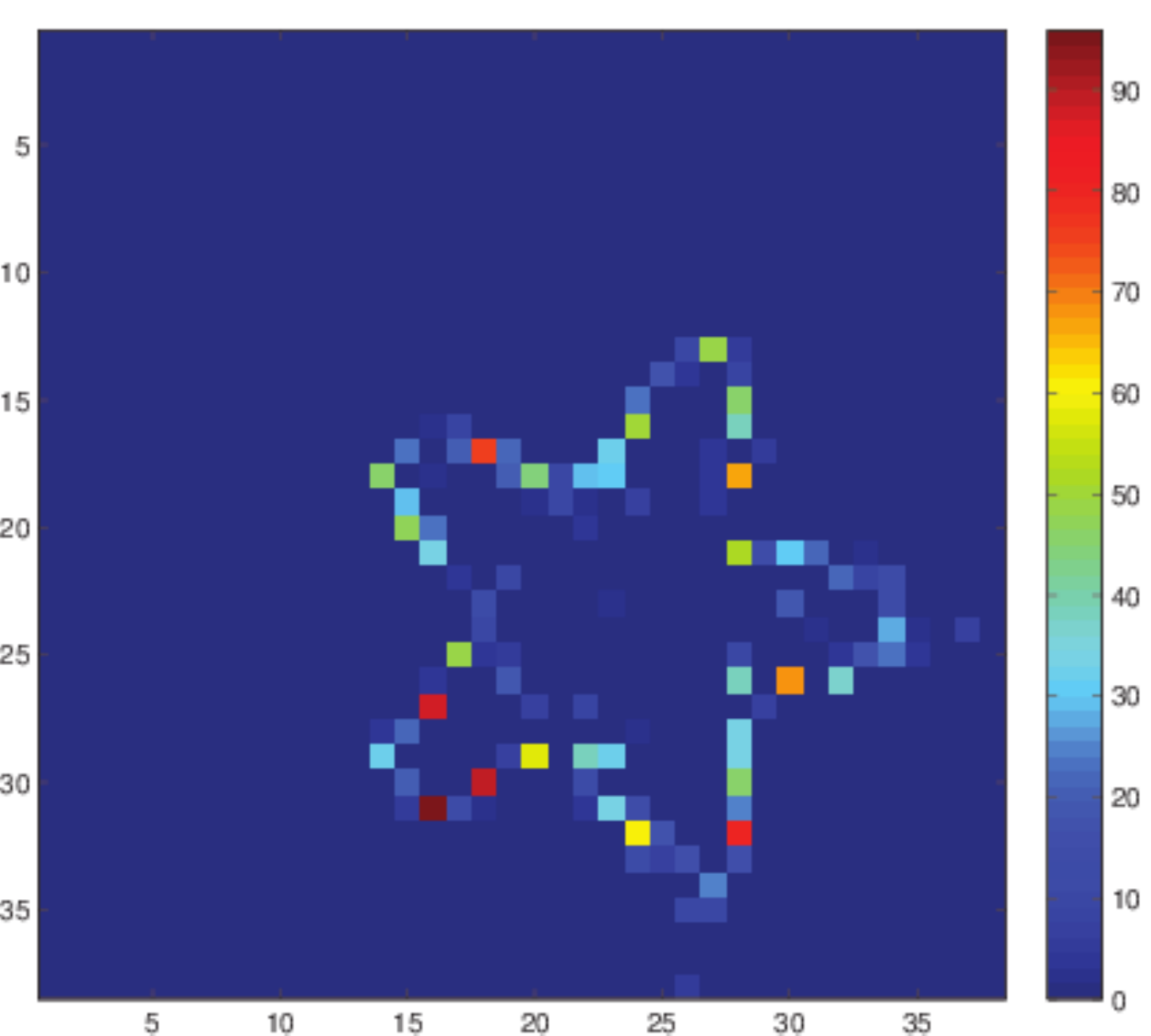}}
  \subfigure[]{\includegraphics[width=7.5cm]{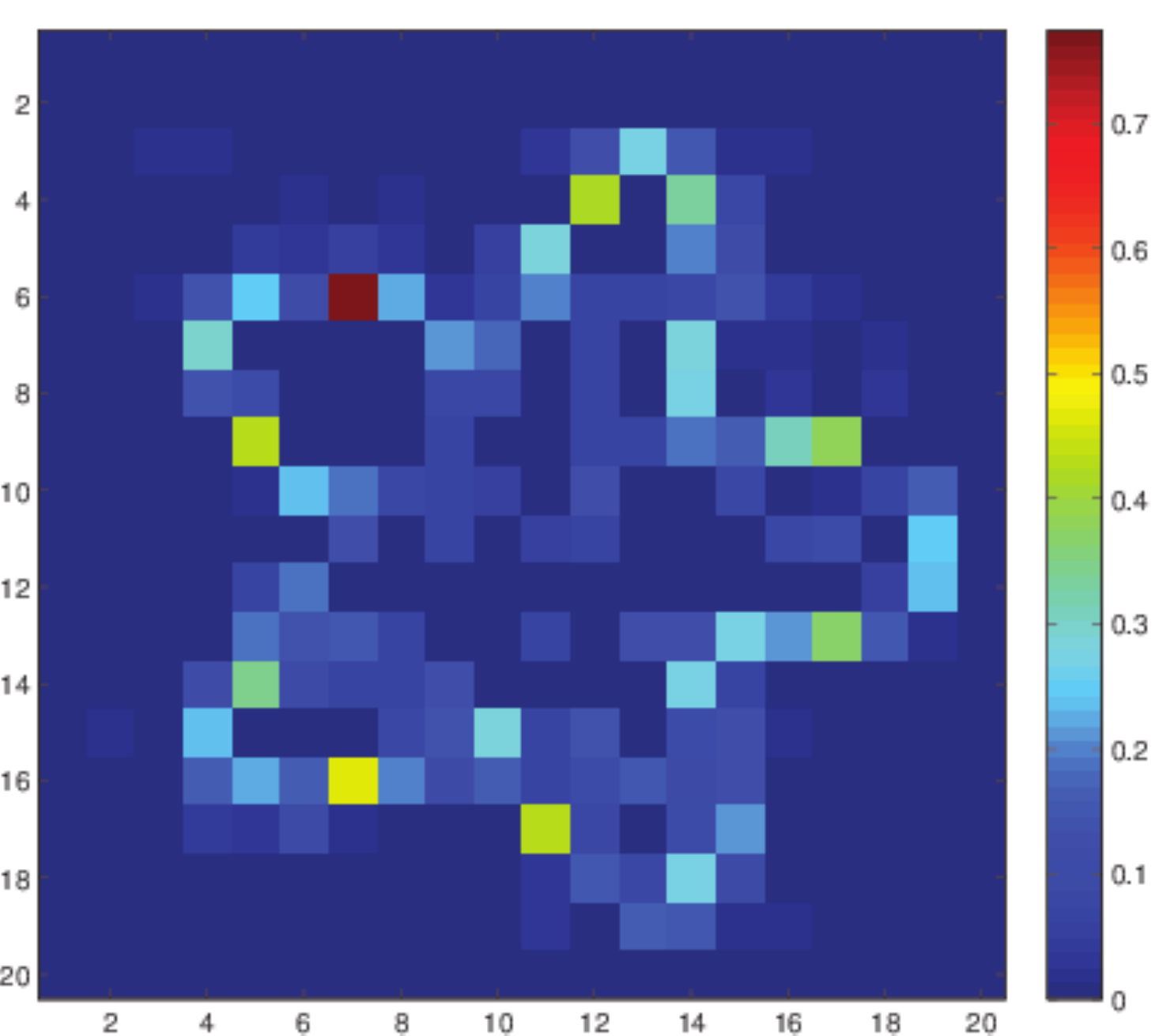}}
  \subfigure[]{\includegraphics[width=7.5cm]{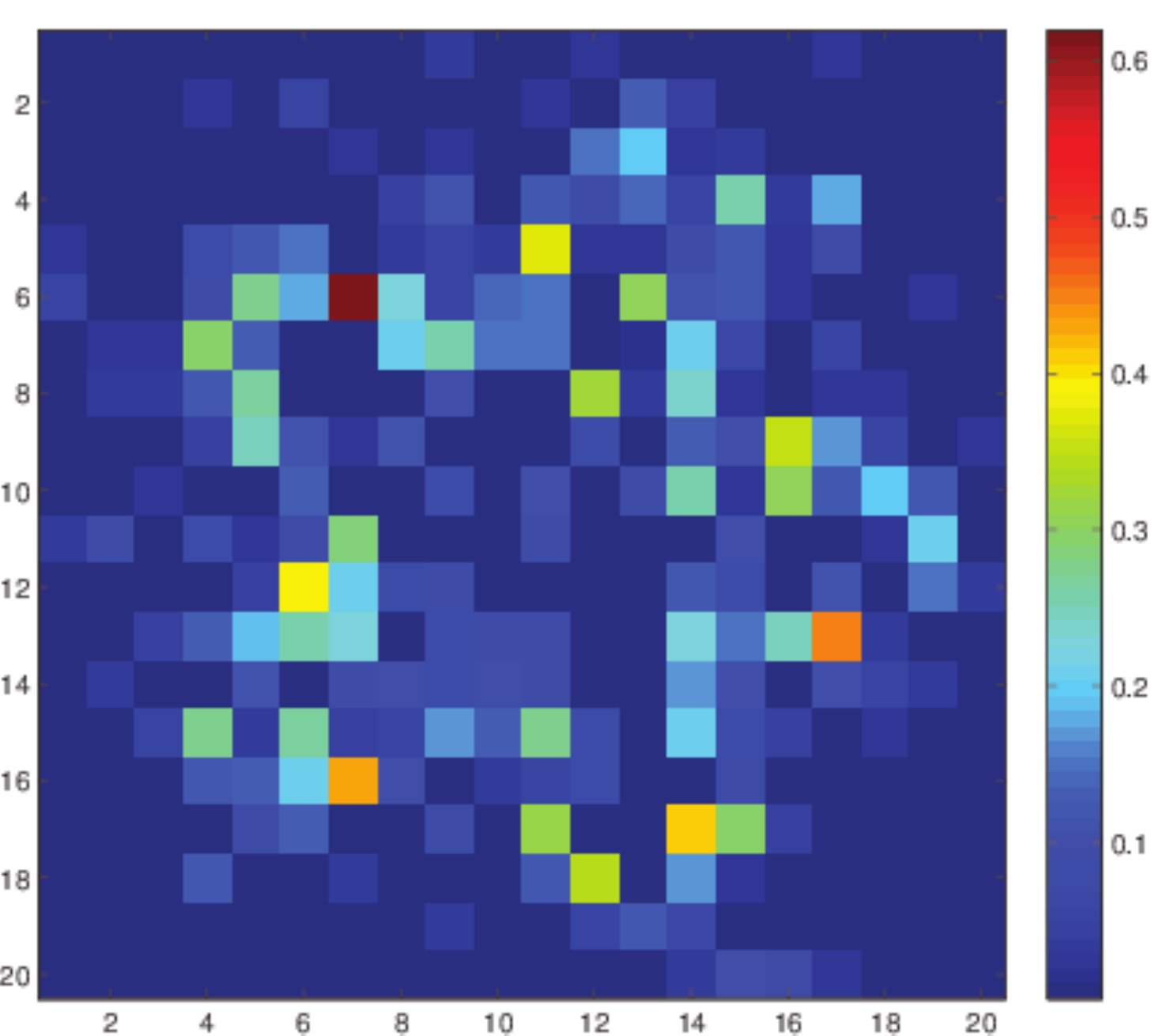}}
  \caption{(a, b): Images obtained by Algorithm~\ref{algo:max-imaging} applied on $\mX$
    reconstructed from data with noise, $L=-4$. (c, d): Images obtained directly from data $\V$.
    (a, c): Noise level $\sigma_0=50\%$, (b, d): $\sigma_0=100\%$.}
  \label{fig:imaging_recon_noisy}
\end{figure}

In Figure~\ref{fig:imaging_recon_noisy_super} the same experiments of imaging with noisy data were
conducted at the scale $L=-5$. We notice that the images returned by
Algorithm~\ref{algo:max-imaging} have dimension $64\times 64$, which is much higher than that of the
grid of transmitters ($20\times 20$). Furthermore, the results remain robust up to the noise level
$\sigma_0=50\%$. These confirm the super-resolution character of the wavelet based imaging
algorithm.

\begin{figure}[htp]
  \centering
  \subfigure[]{\includegraphics[width=7.5cm]{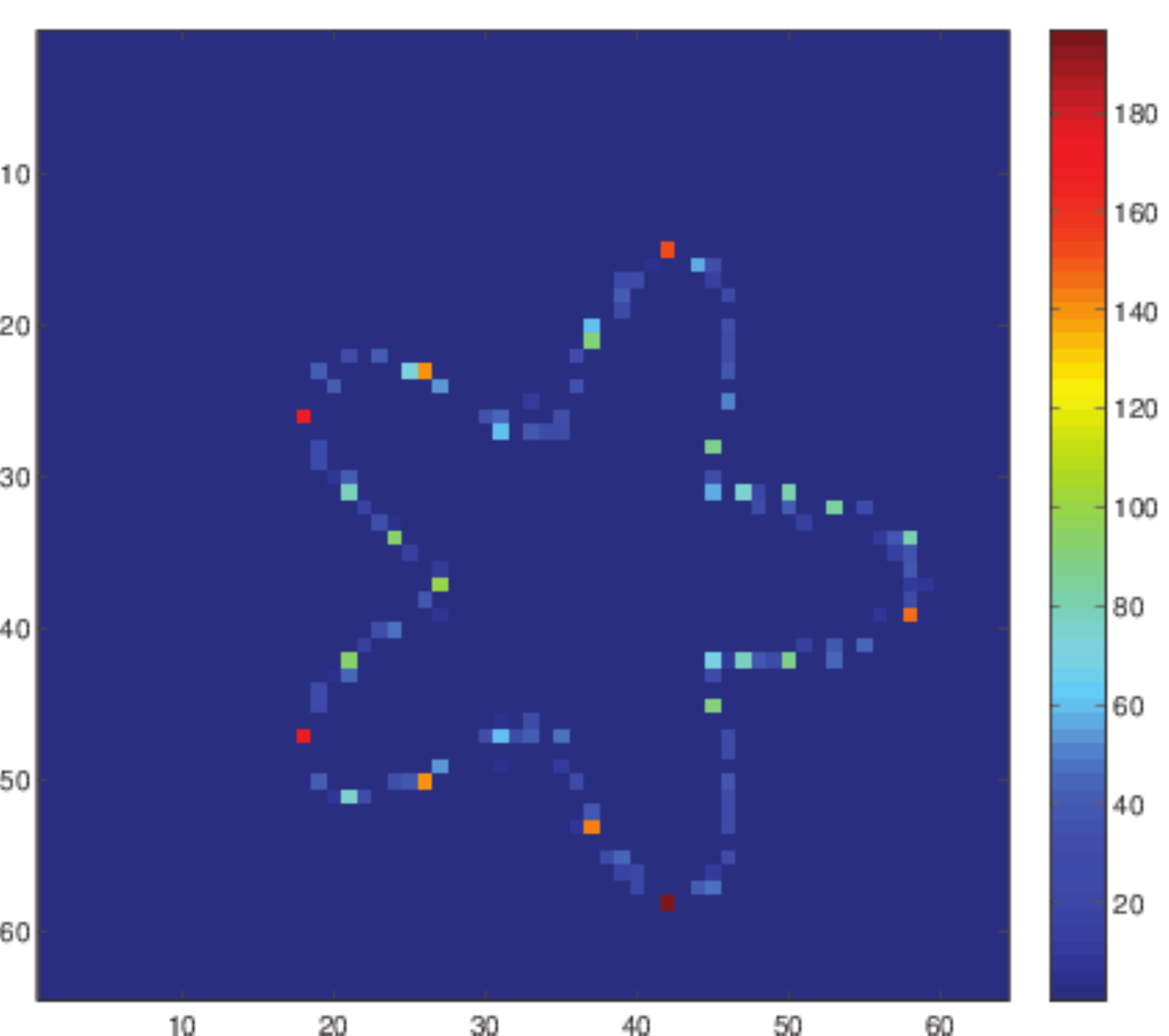}}
  \subfigure[]{\includegraphics[width=7.5cm]{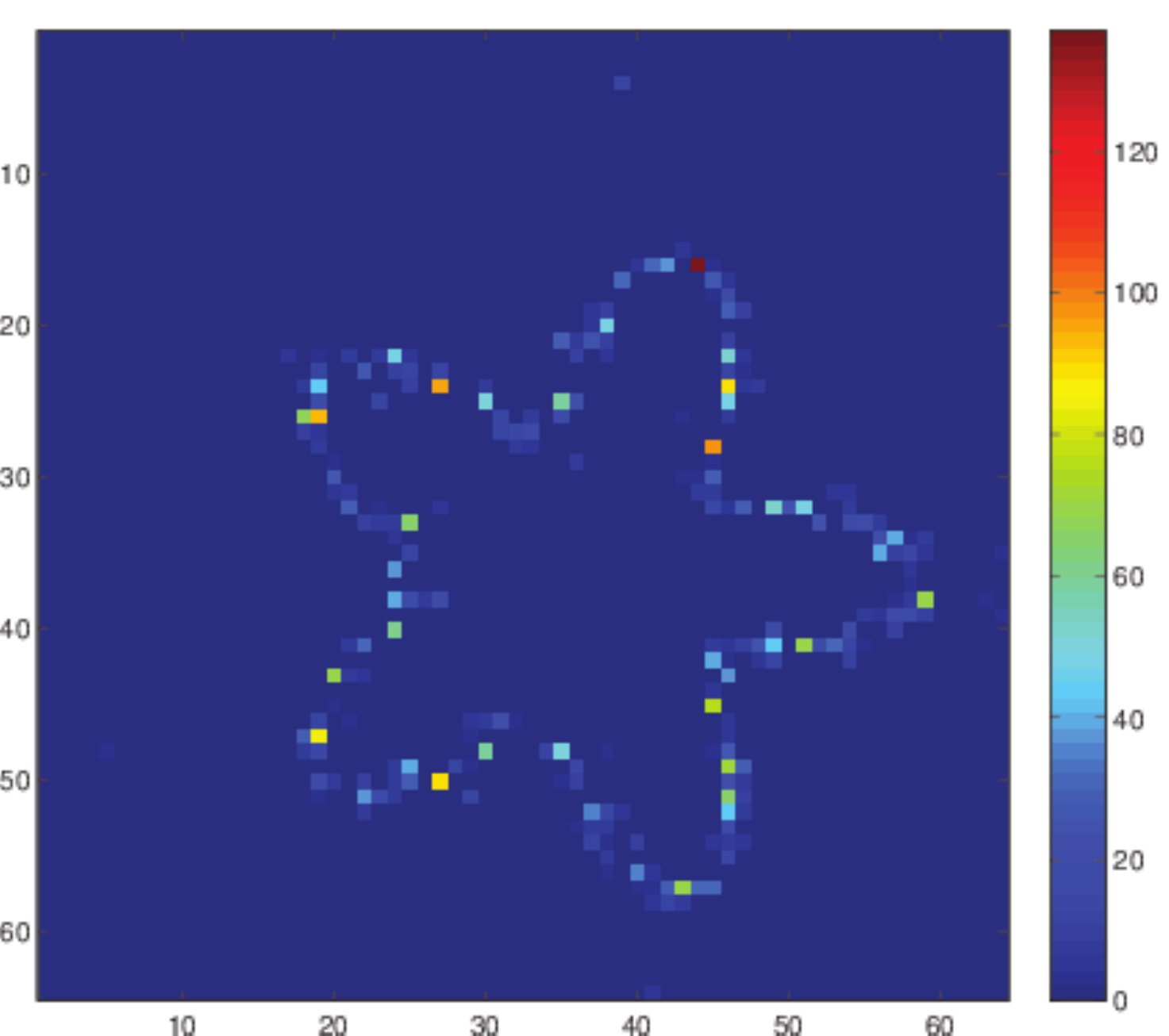}}
  \caption{Same experiments as Figures~\ref{fig:imaging_recon_noisy} (a, b) at the scale $L=-5$. (a):
    Imaging by maximum with $\sigma_0=10\%$ of noise, (b): with $\sigma_0=50\%$ of noise.}
  \label{fig:imaging_recon_noisy_super}
\end{figure}

\section{Discussion}
\label{sec:discussion}

\subsection{Effect of the conductivity constant $\kcst$}
\label{sec:effet-cond-const}
The constant $\kcst$ as defined in section~\ref{sec:electric-sensing} is actually the ratio between
the conductivity of the target and the background (set to 1 in this paper). Further numerical
experiments suggest that the performance of Algorithm~\ref{algo:max-imaging} depend on $\kcst$: the
results may deteriorate when $\kcst$ becomes large (\eg~$\kcst\geq 4$). This can be explained easily
for the case of a unit disk. In fact, it can be seen from \eqref{eq:WPT_on_cercle} that the ratio
between the overlapped and non overlapped (in terms of the functions $\phiLn, \phiLnp$) coefficients
of $\mX$ varies with $\kcst$ as $1/(\kcst-1)$. Hence, the localization property
(section~\ref{sec:localization-prop}) becomes more (resp. less) pronounced when $\kcst\rightarrow 1$
(resp. $\kcst\rightarrow +\infty$), and the imaging algorithm is impacted accordingly. Nonetheless,
we note also that when $\kcst\rightarrow 1$, the target $D$ becomes indistinguishable from the
background and the measured data $\V$ decreases to zero (without considering the noise). These
observations suggest that in practice there may exist some numerical ranges for $\kcst$ and for the
noise level on which the imaging algorithm is more or less effective.

\subsection{Representation with the wavelet $\psi$}
\label{sec:repr-with-wavel}

In section~\ref{sec:WPT} we used only the scaling function $\phi$ for the approximation and the
representation of $\TauD$, while it is also possible to use the wavelet functions $\psi^k$ together
with $\phi$ to represent $\TauD$ and obtain another form of $\mX$.  More precisely, let $W_j$ be the
detail space spanned by $\set{\psikjn}_{n\in\Z^2,k=1,2,3}$ and $Q_j$ be the orthogonal projectors
onto $W_j$. For any two scales $L \leq J$, it holds
\begin{align}
  \label{eq:Tau_apprx_std}
  \TauD(P_Lf, P_L g) = &\sum_{j,j'=L+1}^J \myubrace{\TauD(Q_j f, Q_{j'}g)}{D_{j,j'}} +
  \sum_{{j'}=L+1}^J \myubrace{\TauD(P_J f, Q_{j'}g)}{C_{J,j'}} \notag \\ & + \sum_{j=L+1}^J
  \myubrace{\TauD(Q_jf, P_Jg)}{B_{j,J}} + \myubrace{\TauD(P_J f, P_J g)}{A_{J,J}} = \mbf f^\top \mX
  \mbf g,
\end{align}
where we used the fact that $P_{L} f = P_{J} f + \sum_{j=L+1}^{J} Q_j f$, and $\mbf f, \mbf g$ are respectively
the coefficient vectors of $f$ and $g$  under the basis $\set{\phi_{J,n}, n\in\Z^2} \cup
\set{\psikjn, j=L\ldots J, n\in\Z^2, k=1,2,3}$. The coefficient matrix $\mX$ now takes the form
\begin{align}
  \label{eq:WPT_std_2}
  \mX = \mX[D,L,J] =
  \begin{pmatrix}
    D_{L+1,L+1} & \ldots & D_{L+1,J} & B_{L+1,J}\\
    \vdots & \ddots & \vdots & \vdots \\
    D_{J,L+1} & \ldots & D_{J,J} & B_{J,J}\\
    C_{J,L+1} & \ldots & C_{J,J} & A_{J,J}\\
  \end{pmatrix},
\end{align}
where $D,C,B,$ and $A$ denote the block matrices corresponding to the terms marked by braces in
\eqref{eq:Tau_apprx_std} respectively. In particular, $D_{j,j'}$ contains the detail coefficients of
type $\TauD(\psikjn,\psikjnp)$ with $k,k'=1,2,3$, while $A_{J,J}$ contains the approximation
coefficients $\TauD(\phiJn,\phiJnp)$. Remark that in the case $L=J$, $\mX[D, L, L]$ is reduced to
$A_{L,L}$ which is identical to the coefficient matrix defined in \eqref{eq:A_mat}.

Moreover, one can easily prove (using the conjugated filters) that for any $J,J'\geq L$, $\mX[D, L,
J']$ and $\mX[D, L, J']$, regarded as $\ell^2(\Z)$ vectors, are equivalent up to an $\ell^2$ unitary
transform. Therefore the choice of the scale $J$ is not important since $\mX[D, L, J]$ is equivalent
to $\mX[D,L,L]$ for any $J\geq L$, and their dimensions are asymptotically equal as $L\rightarrow
-\infty$ for a fixed domain $\Omega$.

A natural question is to know whether the equivalent representation $\mX[D,L,J]$ with $J<L$ is more
sparse than $\mX[D,L,L]$. In Figure~\ref{fig:DD_AA_decay} we plot the decay of the coefficients of the
four block matrices in $\mX[D,L,L+1]$ with $L=-5$. It can be seen that for the numerical range
considered here, the detail coefficients have similar decay as the approximation coefficients. In
fact, like $\TauD(\phiLn,\phiLnp)$, the main reason for the sparsity of the detail coefficients
$\TauD(\psikjn, \psikjnp)$ is the intersection between the support of wavelets and the boundary $\p
D$, and for the same reason the localization property (\ie, Proposition \ref{prop:localization-p-d})
remains valid for the wavelets $\psikjn, k=1,2,3$. Hence the representation $\mX[D,L,J]$ has a
similar sparsity as $\mX[D,L,L]$ and does not present substantial advantages for the applications
considered in this paper.

\graphicspath{{./figures/}}

\begin{figure}[htp]
  \centering
  \subfigure[]{\includegraphics[width=7.5cm]{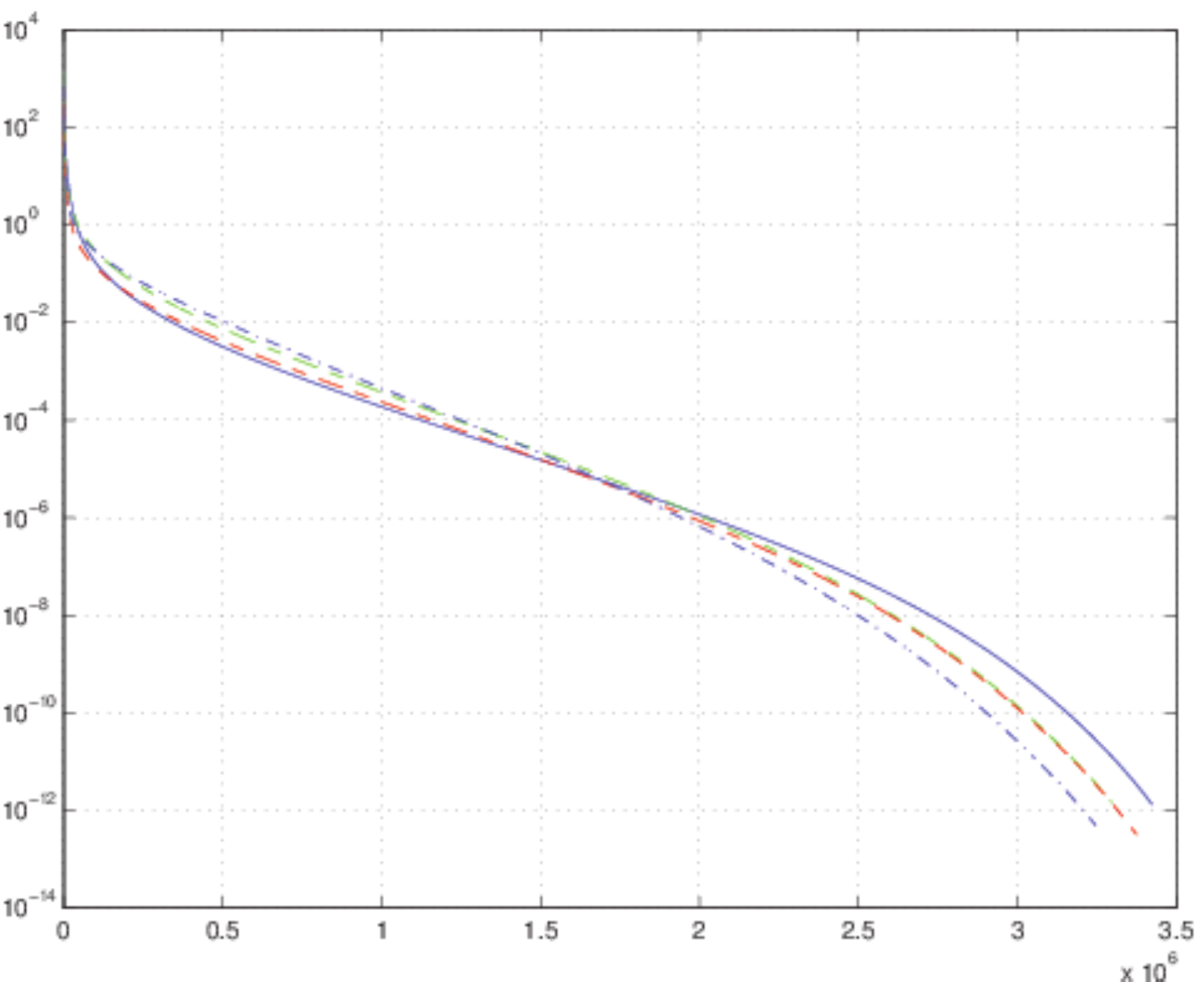}}
  \subfigure[]{\includegraphics[width=7.5cm]{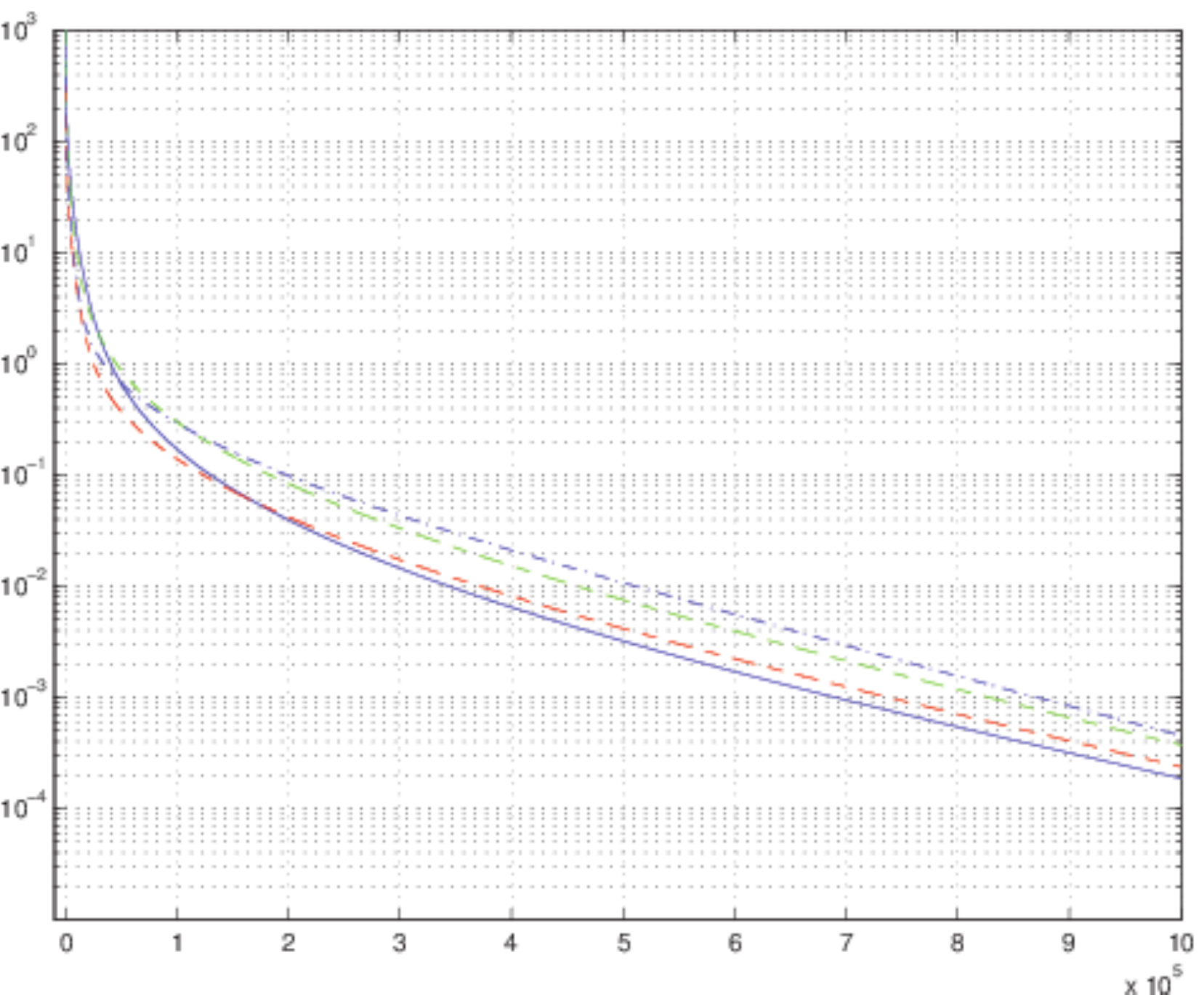}}
  \caption{Coefficients (absolute value) of $\mX[D,L,L+1]$ with $L=-5$ in decreasing order in the
    logarithmic scale. (a): All coefficients. (b): Zoom-in on the first $10^6$ coefficients. The
    different curves in the figures represent: $\TauD(\phiLn,\phiLnp)$ (dash-dot line),
    $\TauD(\psi^3_{L,n}, \phiLnp)$ (dashed line in green), $\TauD(\phiLn, \psi^3_{L,n'})$ (dashed
    line in red), and $\TauD(\psi^3_{L,n}, \psi^3_{L,n'})$ (solid line).}
  \label{fig:DD_AA_decay}
 
\end{figure}
\section{Conclusion}
\label{sec:conclusion}

In this paper we presented a general framework for the electro-sensing problem, and proposed a new
wavelet based approach for the solution of the inverse problem and the visualization of the target. The
new approach is complementary to the previous developed polynomial based approach in that both of
them can be seen as choosing the basis adapted to the measurement system. In case of the near-field
measurement, the wavelet approach is more appropriate than to the polynomial approach since it gives
a sparse representation of the geometric information of the target and allows to reconstruct more
information by exploiting the sparsity using  $\ell^1$ minimization, which is superior in
robustness than the linear estimator in this case.  Finally, numerical results show the performance
of the wavelet imaging algorithms, confirming the efficiency of the new approach.

\appendix

\section{Proof of Lemma \ref{lem:polynomial-basis-Hs}}
\label{sec:proof-lemma-polybasis}

\begin{proof}
  For a given $g\in H^s(\Omega)$, by density of $\mcl C^\infty(\Omega)$ in $H^s(\Omega)$, for any
  $\epsilon>0$ there exists $u\in\mcl C^\infty(\Omega)$, such that $\norm{g-u}_{\sss{H^s}}\leq
  \epsilon/2$.  On the other hand, since $\Omega$ is bounded, one can construct the Bernstein
  polynomial to approximate a $\mcl C^\infty$-function and its first $s$ order derivatives
  simultaneously and uniformly on $\Omega$ \cite{kingsley_bernstein_1951}. Hence, there exists
  $K=K(\epsilon)>0$ and a polynomial
  \begin{align}
    \label{eq:polynomial_p}
    p(x)=\sum_{\infabs\alpha\leq K} a_\alpha x^\alpha, \ x\in\R^2,
  \end{align}
  such that $\norm{u-p}_{\sss{H^s}}\leq \epsilon/2$. Therefore,
  \begin{align*}
    \norm{g-p}_{\sss{H^s}}\leq \norm{g-u}_{\sss{H^s}}+\norm{u-p}_{\sss{H^s}} \leq \epsilon,
  \end{align*}
  which proves that the polynomial basis is complete in $H^s(\Omega)$ for $s\geq 0$.
\end{proof}



\section{Proof of Proposition \ref{prop:decay_wavelet_coeff_Gammas}}
\label{sec:proof-decay-wvl-coeff}

\begin{proof}
  Since $x_s\notin\COmega$, there exists a scale $j_0$ small enough such that $x_s\neq 2^j n$ for any
  $n\in\Lambda_j^k, j\leq j_0, k=1,2,3$. The Taylor expansion up to order $p-1$ of $\Gammas$ reads:
  \begin{align*}
    \Gammas(x) = \sum_{\abs\alpha=0}^{p-1}\frac{(x-2^jn)^\alpha}{\alpha!}\p^\alpha\Gamma(2^jn-x_s)+R(x)
  \end{align*}
  with the rest $R(x)$ being given by
  \begin{align*}
    R(x) = \sum_{\abs\alpha=p} \frac{p}{\alpha!} {(x-2^jn)}^\alpha\int_0^1{(1-t)}^{p-1}.
    \p^\alpha\Gamma(2^jn-x_s+t(x-2^jn))dt
  \end{align*}
  For $k=1,2,3$, the two-dimensional wavelet $\psikjn$ is orthogonal to the polynomial $x^\alpha$
  for any $\abs\alpha<p$. Hence by the change of variables $x\rightarrow 2^j x$ we obtain
  \begin{align}
    \label{eq:Gammas_psikjn_2}
    \seq{\Gammas, \psikjn} 
    &= \sum_{\abs\alpha=p} \frac{ 2^{j(p+1)} p}{\alpha!} \int_0^1(1-t)^{p-1}\int_{\supp\psi^k} x^\alpha \psi^k(x)
    \p^\alpha\Gamma(2^jn-x_s+t 2^j x)dx\,dt.
  \end{align}
  When $j\rightarrow -\infty$, there exists positive constants $c_0$ and $c_1$ depending only on $\COmega$
  and $\psi^k$, such that for any $n\in\Lambda_j^k$, the distance between $x_s$ and $2^j n$
  satisfies 
  $$c_0 \rho\leq \norm{2^jn - x_s}\leq c_1 \rho.$$
  Combining the fact that $\psi^k$ is compactly supported together with the estimate $\Abs{\p^\alpha\Gamma(x)}\asymp
  \norm{x}^{-\abs\alpha}$, we conclude from \eqref{eq:Gammas_psikjn_2} that
  \begin{align*}
    \abs{\seq{\Gammas, \psikjn}} \asymp 2^{j(p+1)} \rho^{-p}  \ \text{ as } j\rightarrow -\infty,
  \end{align*}
  where the underlying constants depend only on $x_s, \COmega, \psi^k,$ and $p$.
\end{proof}



\section{Proof of Proposition \ref{prop:bounds-mx}}
\label{sec:proof-proposition-bound-X}

\begin{proof}
  For any $f\in H^2,g\in H^1$, let $\mbf f, \mbf g$ be the coefficient vectors defined in
  \eqref{eq:Coeff_vec_std}.  By the boundness of $\TauD$, we have
  \begin{align*}
    \abs{\mbf f^\top \mX \mbf g} = \Abs{\TauD(P_L f, P_L g)} \leq C \norm{P_L f}_{H^2} \norm{P_L g}_{H^1}.
  \end{align*}
  Since the scaling function $\phi\in\mcl C_0^r(\R)$ with $r\geq 2$, we have the inverse estimate
  (\cite[Theorem 3.4.1]{cohen_numerical_2003}):
  \begin{align}
    \label{eq:Bernstein341}
    \norm{P_L f}_{H^2} \leq C 2^{-2L}\norm{P_L f}_{L^2} = C 2^{-2L}\norm{\mbf f}_2,
  \end{align}
  where the last identity comes from $\norm{P_L f}_{L^2} = \norm{\mbf f}_{\ell^2}$
  ($\set{\phiLn}_{n\in\Z^2}$ is an orthonormal basis of the approximation space $V_L$). Similarly,
  one has the inverse estimate $\norm{P_L g}_{H^1}\leq C 2^{-L}\norm{\mbf g}_2$. Finally, we obtain
  \begin{align*}
    \abs{\mbf f^\top \mX \mbf g }\leq C 2^{-3L}\norm{\mbf f}_{\ell^2} \norm{\mbf g}_{\ell^2},
  \end{align*}
  which proves the right-hand side of \eqref{eq:bound_WPT_mat_l2}.

  Let $D_\epsilon$ be a circular domain of width $\epsilon$ around $\p D$ defined as
  \begin{align}
    \label{eq:domain_D_e}
    D_{\epsilon} = \set{x\, |\, \text{dist}(x,\p D)\leq \epsilon}.
  \end{align}
  Let $\eta >\epsilon$ and $D_\eta$ be another circular domain containing $D_\epsilon$, and put
  \begin{align*}
    f(x)=
    \begin{cases}
      x_1 & \text{ if } x\in D_\eta, \\
      0 & \text{ otherwis.}
    \end{cases}
  \end{align*}
  As $L\rightarrow -\infty$ we can choose the constant $\eta \propto 2^L$ such that any $\phiLn$
  whose support intersects $D_\epsilon$ has its support strictly included in $D_\eta$.  Now by
  definition of the wavelet basis $\Xbasis$ in section~\ref{sec:comp-supp-wavel}, the approximation
  space $V_L$ contains polynomials of order $p-1$. Therefore, when restricted on $D_\epsilon$,
  \begin{align}
    \label{eq:f_PL_f_restrict}
    f|_{D_\epsilon}=(P_L f)|_{D_\epsilon}.
  \end{align}
  On the other hand, explicit bounds exist on the generalized polarisation tensors
  \eqref{eq:GPT_def} $\mX_{\alpha,\beta}$ (\cite[Lemma 4.12]{ammari_polarization_2007}). In
  particular, for $\abs\alpha=1$ the following estimate holds \cite{capdeboscq}
  \begin{align}
    \label{eq:GPT_bound}
    \abs{D} \leq \frac{\kcst+1}{\abs{\kcst-1}}\Abs{\TauD(x^\alpha,x^\alpha)} \leq C \abs{D}
  \end{align}
  with $C$ being some constant independent of $D$ and $\abs D$ being the volume of $D$. Hence, if $\mbf f$ is
  the coefficient vector of $P_L f$, using \eqref{eq:f_PL_f_restrict} we obtain that
  \begin{align*}
    \abs{\mbf f^\top \mX \mbf f} = \abs{\TauD(P_L f, P_L f)} = \abs{\TauD(f,f)} = \abs{\TauD(x_1,
      x_1)} \geq \frac{\abs{\kcst-1}}{\kcst+1}\abs{D}.
  \end{align*}  
  Finally, notice that $\norm{\mbf f}_{\ell^2}^2 = \norm{P_L f}_{L^2}^2 \leq \norm{f}_{L^2}^2 \lesssim 2^L$,
  which gives the left-hand side of \eqref{eq:bound_WPT_mat_l2}.
\end{proof}


\bibliographystyle{plain}
\bibliography{Biblio}
\end{document}